\definecolor{fgreen}{RGB}{44,144, 14}
\renewenvironment{proof}{{\bfseries Proof.}}{\qed}
\numberwithin{equation}{section} 
\newtheorem{theorem}{Theorem}[section] 
\newtheorem{proposition}[theorem]{Proposition} 
\newtheorem{corollary}[theorem]{Corollary} 
\newtheorem{lemma}[theorem]{Lemma} 
\theoremstyle{definition}
\newtheorem{definition}[theorem]{Definition} 
\newtheorem{remark}[theorem]{Remark} 
\newtheorem{example}[theorem]{Example}
\def\R{\mathbb R}
\def\C{\mathbb C}
\def\Z{\mathbb Z}
\def\R{\mathbb R}
\newcommand{\SL}{\mathrm{SL}}
\newcommand{\st}{\mathrm{Stab}}
\def\P{\mathbb P}
\def\SL{{\rm SL}}
\newcommand{\secref}[1]{Section~\ref{#1}}
\newcommand{\thmref}[1]{Theorem~\ref{#1}}
\newcommand{\lemref}[1]{Lemma~\ref{#1}}
\newcommand{\remref}[1]{Remark~\ref{#1}}
\newcommand{\propref}[1]{Proposition~\ref{#1}}
\newcommand{\corref}[1]{Corollary~\ref{#1}}
\begin{document}

\title[On Character Variety of Anosov Representations]{On Character Variety of Anosov Representations}
\author[K. Gongopadhyay]{Krishnendu Gongopadhyay}
\author[T. Nayak]{Tathagata Nayak}

\address{Indian Institute of Science Education and Research (IISER) Mohali,
	Knowledge City,  Sector 81, S.A.S. Nagar 140306, Punjab, India}
\email{krishnendu@iisermohali.ac.in}

\address{Indian Institute of Science Education and Research (IISER) Mohali,
	Knowledge City,  Sector 81, S.A.S. Nagar 140306, Punjab, India}
 \email{tathagatanayak68@gmail.com}

\subjclass[2020]{Primary 22F30; Secondary 22E46, 32G15, 20C15, }

\keywords{}
\begin{abstract}
   Let $\Gamma$ be the fundamental group of a  $k$-punctured, $k \geq 0$,  closed connected orientable surface of genus $g \geq 2$. We show that the character variety of the $(Q^+, Q^-)$-Anosov irreducible representations, resp.  the character variety of the  $(P^+, P^-)$-Anosov Zariski dense representations of  $\Gamma$ into $\SL(n , \C)$, $n \geq 2$,  is a  complex  manifold of complex dimension \hbox{$(2g+k-2)(n^2-1)$}.  For $\Gamma=\pi_1(\Sigma_g)$, we also show that these character varieties are  holomorphic symplectic manifolds.
   \end{abstract}
   \textit{}

%\tableofcontents
	
	\maketitle 
\section{Introduction}

In \cite{La}, Labourie introduced the notion of Anosov representations for surface groups into ${\rm PSL}(n,\R)$ during  the study of the Hitchin components. These representations have been an object of intense investigations since then, and have received many applications in the study of higher Teichm\"uller theories, see the surveys \cite{ka0}, \cite{ka}, \cite{kl}, \cite{os} \cite{po}, \cite{wi}. Guichard and Wienhard extended the notion of Anosov representations to representations of any word hyperbolic group $\Gamma$ into a semisimple Lie group $G$ in \cite{gw}. In particular, one can take $\Gamma$ as the group $F_k$, the free group of $k$ generators,  or, the fundamental group   $\pi_1(\Sigma_g)$ of a closed connected  orientable surface $\Sigma_g$ of genus $g \geq 2$.

 Now, suppose $G$ is a reductive algebraic group over $\C$ and let $\Gamma$ is a finitely generated group. Define $Hom(\Gamma,G)$ to be the set of homomorphisms from $\Gamma$ to $G$. The group $G$ acts on $Hom(\Gamma,G)$ by conjugation, and the categorical (GIT) quotient ${\mathcal X}=Hom(\Gamma,G)//G$ is called a \emph{character variety}.  In \cite{si}, Sikora investigated the geometry of $\mathcal X$ with emphasis on the case when $\Gamma=F_k$, or, $\pi_1(\Sigma_g)$. Sikora also discussed  reducibility  for subgroups of $G$. Recall that a closed subgroup $P$ of $G$ is called \emph{parabolic} if it contains a \emph{Borel subgroup} (a maximal connected solvable subgroup of $G$). A subgroup $H$ of $G$ is \emph{irreducible} if $H$ is not contained in any proper parabolic subgroup of $G$. The subgroup $H$ is called \emph{completely reducible} if for every parabolic subgroup $P \subset G$ containing $H$, there is some  Levi subgroup $L \subset P$ containing $H$. Obviously, every irreducible subgroup is completely reducible. It follows from  \cite{si} that each element of $\mathcal X$ corresponds to a unique conjugation orbit of completely reducible representations. Therefore,  for completely reducible representations, in particular for irreducible representations, the GIT orbits  and conjugation orbits may be identified, and for this reason we will not distinguish between the two. 

 The aim of this paper is to observe some results connecting the above two notions for $\Gamma=F_k$ or $\pi_1(\Sigma_g)$,  and $G=\SL(n, \C)$, $n, k, g \geq 2$. Note that $\SL(n,\C)$ is a Lie group, as well as semisimple algebraic group over $\C$. Thus both the above notions exist for $\SL(n, \C)$.  Recall that a pair of parabolic subgroups of $G$ is said to be \emph{opposite}  if their intersection is a Levi subgroup of both of them. Let $\C^n = D \oplus H$ be a decomposition of $\C^n$ into an $1$-dimensional subspace $D$ and  a $n-1$ dimensional subspace $H$. Denote, $Q^+ =\st (D)$ and $Q^-=\st (H)$. 
Then $(Q^+,Q^-)$ is a pair of opposite parabolic subgroups of $\SL(n,\C)$. Let  $\mathcal F^+=G/Q^+$ and  $\mathcal F^-= G/Q^-$.  By looking at the orbits of $D$ and  $H$ of the transitive group action of $\SL(n,\C)$ on $1$-dimensional and  $n-1$-dimensional subspaces respectively, we can identify,   $\mathcal F^+\cong \P(\C^n)$ and  $ \mathcal F^- \cong \P({\C^{n}}^{\ast})$. 

In \secref{3}, we prove \lemref{ml}, which shows that the the conjugation action of $\SL(n, \C)$ on the irreducible, resp.  Zariski dense,  Anosov representations  of $\Gamma$  into $\SL(n, \C)$ is closed and hence it gives us well-defined character variety of the respective representations, see   \propref{1.5} and \propref{1.6}. Now one can ask about the structure of these character varieties, and  we prove the following in this regard. 

\begin{theorem}
   Let $\Gamma=F_k$, resp.    $\pi_1(\Sigma_g)$, $k, g \geq 2$. The character variety of the $(Q^+, Q^-)$-Anosov irreducible representations, as well as the character variety of the  $(P^+, P^-)$-Anosov Zariski dense representations of $\Gamma$ into $\SL(n , \C)$, $n \geq 2$,  are complex  manifolds, each of complex dimension \hbox{$(k-1)(n^2-1)$}, resp. $(2g-2)(n^2-1)$.  
\end{theorem}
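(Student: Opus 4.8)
The plan is to realize each character variety as an open subset of a smooth variety and then compute its dimension by a cohomological argument. First I would invoke the lemma and propositions cited before the theorem: \lemref{ml}, \propref{1.5}, \propref{1.6} guarantee that the conjugation action of $\SL(n,\C)$ on the relevant loci of $\Ho(\Gamma,\SL(n,\C))$ is proper with closed orbits, so the quotient is a genuine (Hausdorff, and in fact analytic) space rather than a mere GIT quotient. The key point is that Anosov representations form an \emph{open} subset of $\Ho(\Gamma,\SL(n,\C))$ (Anosovness is a stable condition under small deformations, by \cite{gw}), and both irreducibility and Zariski density are also open conditions. Intersecting, the loci in question are Zariski-open (hence open in the analytic topology) in $\Ho(\Gamma,\SL(n,\C))$. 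For irreducible representations the centralizer is exactly the center $Z(\SL(n,\C))$, which is finite; for Zariski dense representations the centralizer is likewise contained in $Z(G)$ and finite. Hence the $\SL(n,\C)$-action on these loci has finite stabilizers, so locally the quotient looks like $\Ho(\Gamma,\SL(n,\C))$ modulo a finite group acting on a smooth manifold — and combined with properness and the slice theorem this yields a complex manifold structure.

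Next I would compute the local dimension at a representation $\rho$ via the tangent space, which is the group cohomology $H^1(\Gamma,\mathfrak{sl}(n,\C)_{\mathrm{Ad}\,\rho})$. Since the stabilizer of $\rho$ is finite, $H^0(\Gamma,\mathfrak{sl}(n,\C)_{\mathrm{Ad}\,\rho})=0$. For $\Gamma=F_k$ the classifying space is a wedge of $k$ circles, so the Euler characteristic of $\Gamma$ is $1-k$, giving
\begin{equation*}
\dim H^0 - \dim H^1 = (1-k)\dim\mathfrak{sl}(n,\C) = (1-k)(n^2-1),
\end{equation*}
and with $H^0=0$ this forces $\dim H^1 = (k-1)(n^2-1)$; since $H^2$ of a free group vanishes there are no obstructions and the locus is smooth of exactly this dimension, so the quotient has complex dimension $(k-1)(n^2-1)$. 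For $\Gamma=\pi_1(\Sigma_g)$ the surface $\Sigma_g$ is a $K(\Gamma,1)$ with Euler characteristic $2-2g$, and Poincaré duality identifies $H^2(\Gamma,\mathfrak{sl}(n,\C))\cong H^0(\Gamma,\mathfrak{sl}(n,\C))^{\ast}=0$; therefore again $H^1$ is the only nonzero cohomology, and
\begin{equation*}
-\dim H^1 = (2-2g)(n^2-1), \qquad \text{so} \qquad \dim H^1 = (2g-2)(n^2-1).
\end{equation*}
Smoothness of $\Ho(\Gamma,\SL(n,\C))$ at $\rho$ follows because the quadratic obstruction lands in $H^2$, which vanishes; thus the representation variety is smooth at these points and, after dividing by the free (up to finite center) proper $\mathrm{PSL}(n,\C)$-action, we get a complex manifold of dimension $\dim H^1 - \dim\mathfrak{sl}(n,\C) + \dim H^0 = \dim H^1 - 0$ in the quotient tangent direction — more precisely the tangent space to the character variety at $[\rho]$ is exactly $H^1(\Gamma,\mathfrak{sl}(n,\C)_{\mathrm{Ad}\,\rho})$, of the asserted dimension.

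The main obstacle — and the part requiring genuine care rather than bookkeeping — is establishing the manifold structure at representations whose stabilizer is a nontrivial finite subgroup of the center $Z(\SL(n,\C))\cong\Z/n$. A priori a finite group acting on a smooth space produces an orbifold, not a manifold, so one must check that the center acts \emph{trivially} on a slice. This is where I expect to use that $Z(G)$ acts trivially by conjugation on all of $\Ho(\Gamma,G)$ (conjugation by a central element is the identity), so the effective action is that of $\mathrm{PSL}(n,\C)$, which is now \emph{free} on the irreducible / Zariski-dense locus; freeness plus properness of a complex-analytic group action gives a holomorphic principal bundle, and the base is therefore a complex manifold. The remaining routine verifications are: that Anosovness is open (cite \cite{gw}), that the two notions of quotient agree on the completely reducible locus (cited from \cite{si} in the introduction), and that the cohomological dimension count is uniform over the whole connected locus so that the manifold has pure dimension as stated.
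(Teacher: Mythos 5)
Your proposal is correct and follows essentially the same route as the paper: openness of the Anosov, irreducible, and Zariski-dense loci, smoothness of $Hom^i(\Gamma,\SL(n,\C))$, and a free, properly discontinuous action of $\SL(n,\C)/C(\SL(n,\C))$ yielding a quotient manifold whose tangent space at $[\rho]$ is $H^1(\Gamma,\mathrm{Ad}\circ\rho)$; your Euler-characteristic computation of $\dim H^1$ is exactly the content of the Goldman lemma and the Sikora dimension count that the paper cites. One small correction: properness of the conjugation action is not supplied by \lemref{ml}, \propref{1.5} and \propref{1.6} (these only show that the action preserves the Anosov loci) but by the Johnson--Millson proper discontinuity result the paper invokes, and your displayed expression $\dim H^1-\dim\mathfrak{sl}(n,\C)+\dim H^0$ for the quotient dimension is a slip (the correct count is $\dim Z^1-\dim B^1=\dim H^1$), though you state the right conclusion immediately afterwards.
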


When $\Gamma=\pi_1(\Sigma_g)$, we also observe existence of a holomorphic symplectic form on these  Anosov character varieties. In view of this,  the above theorem may be rephrased for the surface groups as follows. 

\begin{corollary}
    
Let $\Gamma$ be the fundamental group of a $k$-punctured, $k \geq 0$,  closed connected orientable surface of genus $g \geq 2$.  Then the character variety of the $(Q^+, Q^-)$-Anosov irreducible representations, resp.  the character variety of the  $(P^+, P^-)$-Anosov Zariski dense representations of  $\Gamma$ into $\SL(n , \C)$, $n \geq 2$,  is a  complex  manifold of complex dimension \hbox{$(2g+k-2)(n^2-1)$}.  For $\Gamma=\pi_1(\Sigma_g)$, the respective character varieties are  holomorphic symplectic manifolds. 
\end{corollary}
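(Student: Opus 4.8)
The plan is to deduce the corollary from Theorem 1.1 by two routine moves. First, I would identify the fundamental group of a $k$-punctured genus $g$ surface. For $k \geq 1$ this group is free of rank $2g + k - 1$, so setting $r = 2g + k - 1$ we have $\Gamma \cong F_r$ with $r \geq 2$ (using $g \geq 2$), and Theorem 1.1 applied to $F_r$ gives that the two Anosov character varieties are complex manifolds of complex dimension $(r-1)(n^2-1) = (2g+k-2)(n^2-1)$. For $k = 0$ we have $\Gamma \cong \pi_1(\Sigma_g)$, and Theorem 1.1 gives complex dimension $(2g-2)(n^2-1)$, which again equals $(2g+k-2)(n^2-1)$ with $k=0$. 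Thus the dimension formula in the statement is just the uniform repackaging of the two cases of Theorem 1.1, and the only thing to check is that the Euler-characteristic bookkeeping $-\chi(\Sigma_{g,k}) = 2g - 2 + k$ is consistent with both normalizations, which it is.

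The second move is to produce the holomorphic symplectic structure in the closed case $\Gamma = \pi_1(\Sigma_g)$. Here I would invoke the Atiyah--Bott--Goldman construction: on the smooth locus of the $\SL(n,\C)$-character variety of a closed surface group, the tangent space at $[\rho]$ is the group cohomology $H^1(\Gamma, \mathfrak{sl}(n,\C)_{\mathrm{Ad}\,\rho})$, and the cup product paired with the Killing form on $\mathfrak{sl}(n,\C)$ together with Poincar\'e duality $H^1 \times H^1 \to H^2 \cong \C$ gives a nondegenerate alternating bilinear form $\omega_{[\rho]}$ on each tangent space. One must check that this form is holomorphic (it varies holomorphically since the cup-product and the identification of tangent spaces are algebraic/holomorphic in $\rho$) and closed (Goldman's theorem). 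Since, by Proposition 1.5 and Proposition 1.6 together with Lemma \ref{ml}, the locus of $(Q^+,Q^-)$-Anosov irreducible representations (resp.\ $(P^+,P^-)$-Anosov Zariski dense representations) is an open subset of $\Ho(\Gamma,\SL(n,\C))$ consisting of good (irreducible, hence stable with closed orbit) points, the corresponding character variety sits inside the smooth locus of $\mathcal{X}$ as an open complex submanifold, and the Goldman form restricts to a holomorphic symplectic form on it.

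The main point requiring care is the verification that the Anosov character varieties really do land in the \emph{smooth} locus where the Goldman form is defined and nondegenerate: one needs that Anosov representations are irreducible (for the first family this is built into the definition; for the Zariski dense family it is immediate since Zariski density implies irreducibility in $\SL(n,\C)$), and that at an irreducible representation $H^0(\Gamma,\mathfrak{sl}(n,\C)) = H^2(\Gamma,\mathfrak{sl}(n,\C)) = 0$ by Schur's lemma and Poincar\'e duality, so that the character variety is smooth of the expected dimension $(2g-2)(n^2-1) + 2 \dim H^0 = (2g-2)(n^2-1)$ there, matching Theorem 1.1. Granting this, nondegeneracy of $\omega$ is exactly Poincar\'e duality on the compact surface $\Sigma_g$.

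I do not expect a genuine obstacle here; the corollary is essentially a corollary in the literal sense. The one place a reader might want more detail is the assertion that the Goldman symplectic form on the character-variety side actually descends from the closed conjugation action established in Lemma \ref{ml} --- i.e.\ that the quotient by $\PGL(n,\C)$ (or $\SL(n,\C)$) is not merely a manifold but carries the quotient of the invariant form --- but this follows formally from the invariance of the Killing form under the adjoint action. I would therefore present the proof as: (i) reduce the dimension count to the two cases of Theorem 1.1 via $\pi_1(\Sigma_{g,k}) \cong F_{2g+k-1}$ for $k\geq 1$; (ii) for $k=0$, recall the Goldman pairing on $H^1(\pi_1(\Sigma_g),\mathrm{Ad}\,\rho)$, note it is holomorphic, nondegenerate by Poincar\'e duality and closed by Goldman's theorem, and observe that the Anosov loci are open in the irreducible (hence smooth) locus where this pairing lives.
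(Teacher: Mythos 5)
Your proposal is correct and follows essentially the same route as the paper: the dimension formula is obtained by identifying $\pi_1(\Sigma_{g,k})\cong F_{2g+k-1}$ for $k\geq 1$ and invoking the free-group and closed-surface cases of the main theorem, and the symplectic structure in the closed case is the Goldman cup-product form on $H^1(\Gamma,\mathrm{Ad}\circ\rho)$ restricted to the open Anosov loci, exactly as in Theorem \ref{2.4}. The only cosmetic difference is that you phrase nondegeneracy via Poincar\'e duality and the Killing form, while the paper cites Sikora's Corollary 59 for a general $\mathrm{Ad}$-invariant nondegenerate symmetric form $B$; these amount to the same argument.
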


The above results follow from \thmref{2.1}, \thmref{2.2} and \thmref{2.4} proven in this paper. We prove these results essentially by using methods developed in \cite{si} combining it with some results from \cite{gw}.  The corollary follows noting that the fundamental group of a geneus $g$ closed surface with $k$ punctures, $k\geq1$, is a free group of rank $2g+k-1$. 

Now we briefly mention the organization of the paper. We recall preliminary materials in \secref{prel}. In \secref{3}, we explore character varieties of Anosov representations. First we observe \lemref{ml} to have the character varieties well-defined, and then prove \thmref{2.1} and \thmref{2.2}. In \secref{s}, we observe the symplectic structure on the character variety  and then establish \thmref{2.4}.

In the following we give a summary of notations used in this paper for an affine complex connected semisimple algebraic group $G$, 

\noindent $Hom^i(\Gamma,G)=$ the set of irreducible representations,\\
$Hom^{zd}(\Gamma,G)=$ the set of Zariski dense representations,\\
$Hom_{(P^+,P^-)} (\Gamma, G)=$ the set of $(P^+, P^-)$-Anosov representations,\\
$Hom_{(P^+,P^-)}^{zd} (\Gamma, G)=$ the set of $(P^+, P^-)$-Anosov Zariski dense representations,\\ 
$X^i_G(\Gamma)=$ the character variety of irreducible representations of $\Gamma$ into $G$. \\
$X^{zd,A}_G(\Gamma)=$ the character variety of $(P^+, P^-)$-Anosov Zariski dense representations of $\Gamma$ into $G$.\\
$X^{i,A}_G(\Gamma)=$ the character variety of $(Q^+, Q^-)$-Anosov irreducible representations of $\Gamma$ into $G$ in the case of $G=\SL(n , \C)$. 

\section{Preliminaries} \label{prel}

\subsection{Anosov Representations} Let $G$ be a semisimple Lie group and $(P^{+},P^{-})$ be a pair of opposite parabolic subgroups of $G$ (as defined in \cite[Section 3.2]{gw}). Set, $\mathcal F^+=G/P^+$, resp. $ \mathcal F^-= G/P^-$. The subgroup $L=P^+ \cap  P^- $ is the Levi subgroup of both $P^+$ and $P^-$. The homogeneous space $\chi= G/L$ can be embedded into $G/P^+\times G/P^-$  in   the following way: 

Define a group action of $G$ on $G/P^+\times G/P^-$ such that $g.(\bar{x},\bar{y})=(\bar{gx},\bar{gy}) \; \forall\, g \in G,\bar{x} \in G/P^+, \bar{y} \in G/P^- $. It is easy to check that $\st\,(id_{G/P^+},id_{G/P^-})=P^+ \cap  P^-=L$. Therefore by the orbit-stabilizer theorem $G/L=G\,(id_{G/P^+},id_{G/P^-}) $. That is,  the homogeneous space  $\chi= G/L \subset G/P^+\times G/P^-$. Note that $\chi$ is also the unique open $G$-orbit under this action \cite[Section 2.1]{gw}.

Now take $\Gamma$ to be a finitely generated word hyperbolic group. Let $\partial_{\infty} \Gamma$ be the  boundary at $\infty$. Set, $\partial_{\infty} \Gamma^{(2)} =\partial_{\infty} \Gamma \times\partial_{\infty} \Gamma \smallsetminus \{{(t,t)~|~ t\in \partial_{\infty} \Gamma}\} $. Also note that a pair of points $(x_+,x_-)\in\mathcal{F}^+\times \mathcal{F}^-  $ is said to be \emph{transverse} if $(x_+,x_-)\in \chi \subset \mathcal{F}^+\times \mathcal{F}^-  $.
\begin{theorem}\cite[Theorem 2.9]{gw} Let $\Gamma$ be a finitely generated word hyperbolic group. Then there exists a proper hyperbolic metric space $\hat{\Gamma}$ such that 

$(1)$  $ \Gamma \times \mathbb{R}  \rtimes {\mathbb{Z} / 2\mathbb{Z}}$ acts on $\hat{\Gamma}$.

$(2)$ The $\Gamma \times {\mathbb{Z}/2\mathbb{Z}} $ action is isometric.

    $(3)$ Every orbit $\Gamma \to \hat{\Gamma}$ is a quasi-isometry. In particular, $ \partial_{\infty} \hat{\Gamma} \cong \partial_{\infty} \Gamma $.

    $(4)$ The $\mathbb R$ -action is free and every orbit $\mathbb{R} \to \hat{\Gamma}$ is a quasi-isometric embedding. The induced map ${\hat{\Gamma } / \mathbb R}\to \partial_{\infty}{\hat{ \Gamma}}^{(2)} =\partial_{\infty} \hat{\Gamma} \times\partial_{\infty} \hat{\Gamma} \smallsetminus \{(\hat{t},\hat{t})~|~ \hat{t}\in \partial_{\infty} \hat{\Gamma}\} $ is a homeomorphism.
\end{theorem}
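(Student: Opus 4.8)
The plan is to realize $\hat{\Gamma}$ as a metric model of the ``space of parametrized geodesics'' of $\Gamma$, following the Gromov--Champetier--Mineyev construction of the flow space of a hyperbolic group. First I would fix a finite generating set and pass to the Cayley graph $X$, a proper geodesic $\delta$-hyperbolic space on which $\Gamma$ acts geometrically (properly discontinuously, cocompactly, by isometries), so that by the Milnor--\v{S}varc lemma the orbit map $\Gamma \to X$ is a quasi-isometry and $\partial_{\infty} X \cong \partial_{\infty} \Gamma$. The naive candidate for the underlying set of $\hat{\Gamma}$ is $\partial_{\infty} \Gamma^{(2)} \times \mathbb{R}$, where the pair of distinct endpoints records a bi-infinite geodesic and the $\mathbb{R}$-factor is a time parameter along it. On this set the $\mathbb{R}$-action is translation in time, the flip generating $\mathbb{Z}/2\mathbb{Z}$ sends $(a,b,t)$ to $(b,a,-t)$, and $\Gamma$ acts diagonally on $\partial_{\infty} \Gamma^{(2)}$ through its boundary action, twisted in the $\mathbb{R}$-coordinate by a Busemann-type cocycle. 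The real content of the theorem is to equip this set with a genuine metric for which the asserted properties hold.

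The key technical input is a canonical, $\Gamma$- and flip-equivariant choice of geodesic through each pair of distinct boundary points, together with a compatible synchronization of the time origin. In a $\mathrm{CAT}(-1)$ space this is automatic, since geodesics are unique and Busemann functions supply the time coordinate; the difficulty in the word-hyperbolic setting is precisely that bi-infinite geodesics joining two boundary points are neither unique nor canonically parametrized, and do not vary equivariantly in any obvious way. I would resolve this using Mineyev's homological bicombing (the symmetric join), which provides an equivariant, flip-invariant family of quasi-geodesics depending in a controlled way on the endpoints, and a Gromov-product-type expression that pins down the time origin up to bounded error. With this data one defines the metric on $\partial_{\infty} \Gamma^{(2)} \times \mathbb{R}$ so that along each flow line it behaves like the metric on $\mathbb{R}$ up to bounded distortion, while transversally the distance is governed by the visual distance between the endpoint pairs. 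Equivariance of the bicombing then makes the $\Gamma$-action isometric and its symmetry makes the flip isometric, giving properties $(1)$ and $(2)$ as formal consequences.

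For property $(3)$ I would exhibit the $\Gamma$-equivariant map $\hat{\Gamma} \to X$ that sends a parametrized geodesic configuration to an approximating point of $X$, and show that the metric was built precisely so that this map is a quasi-isometry; composing with the Milnor--\v{S}varc quasi-isometry $\Gamma \to X$ shows each orbit map $\Gamma \to \hat{\Gamma}$ is a quasi-isometry, and since quasi-isometries of hyperbolic spaces induce homeomorphisms of boundaries one obtains $\partial_{\infty} \hat{\Gamma} \cong \partial_{\infty} \Gamma$. For property $(4)$, freeness of the $\mathbb{R}$-action is built into the translation structure on the time factor; each flow line tracks a genuine quasi-geodesic of $X$, hence is a quasi-isometrically embedded copy of $\mathbb{R}$; and by construction the quotient $\hat{\Gamma}/\mathbb{R}$ is the endpoint space $\partial_{\infty} \hat{\Gamma}^{(2)}$, with the induced map a homeomorphism once one checks that the endpoint assignment is continuous, proper, and open.

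The main obstacle throughout is the failure of uniqueness and canonicity of geodesics in a coarsely hyperbolic space: every identity that holds on the nose in the $\mathrm{CAT}(-1)$ world becomes a statement only ``up to bounded error,'' and the genuine difficulty is to keep these errors simultaneously bounded and equivariant under all three actions so that the metric is well defined and the isometry, quasi-isometry, and homeomorphism statements hold at once. This is exactly what Mineyev's symmetric-join machinery is designed to control, so I would invoke it as the central black box; the remaining verifications then reduce to standard bookkeeping in the theory of quasi-isometries and boundaries of word hyperbolic groups.
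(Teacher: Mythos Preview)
The paper does not prove this theorem at all: it is quoted verbatim as \cite[Theorem 2.9]{gw} and used as a black box to set up the definition of Anosov representations. There is therefore no ``paper's own proof'' to compare against; the authors simply cite the result and move on.

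That said, your sketch is exactly the construction that underlies the cited theorem. In \cite{gw} the space $\hat{\Gamma}$ is taken to be Mineyev's flow space (symmetric join) of a word hyperbolic group, and the four properties are precisely those established by Mineyev; Guichard--Wienhard only package them into the statement you see here. So your plan---model $\hat{\Gamma}$ on $\partial_{\infty}\Gamma^{(2)}\times\mathbb{R}$, use Mineyev's homological bicombing to get an equivariant, flip-invariant metric, and read off (1)--(4) from the resulting coarse-geometric estimates---is the correct and standard route. The one point worth tightening is that Mineyev's metric makes the $\mathbb{R}$-action bi-Lipschitz rather than isometric (only the $\Gamma\times\mathbb{Z}/2\mathbb{Z}$ part acts by isometries), which is consistent with the theorem as stated; you should be careful not to claim more than this when you say ``along each flow line it behaves like the metric on $\mathbb{R}$ up to bounded distortion.''
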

In fact $\hat{\Gamma}$ is unique up to a $ \Gamma \times {\mathbb{Z} / 2\mathbb{Z}}$ -equivariant quasi-isometry sending $\mathbb R$-orbits to $\mathbb R$-orbits. Denote by $\phi_t$ the $\mathbb R$-action on $\hat{\Gamma}$ and by $(\tau ^+,\tau^-):\hat{\Gamma} \to {\hat{\Gamma } / \mathbb R} \cong \partial_{\infty}{\hat{ \Gamma}}^{(2)} \cong \partial_{\infty} \Gamma^{(2)}$ the maps associating to a point the endpoints of its $\mathbb R$-orbit.
\begin{definition}\cite[Definition 2.10]{gw}  
A representation $\rho: \Gamma \to G $ is said to be  $(P^+,P^-)$-Anosov if there exists continuous $\rho$-equivariant maps $\xi^{+}, \hbox{resp. } \xi^{-}: \partial_{\infty} \Gamma \to \mathcal F^+, \hbox{ resp.  } \mathcal F^{-}$ such that:
\begin{enumerate}
\item  $\forall (t^+,t^-) \in \partial_{\infty} \Gamma^{(2)} ,\: (\xi^+(t^+),\xi^-(t^-))$ is transverse.

\item For one (and hence any) continuous and equivariant family of norms ${(\,{\|\,.\,\|}_{\hat{m}}\,)}_{{\hat{m}}\in \hat{\Gamma}}$ on $(\,T_{\xi^+({\tau ^+ }(\hat{m}))} \mathcal{F}^+)_{\hat{m} \in \hat{\Gamma}}$ (\,\hbox{resp.}\,$(\,T_ {\xi ^-(\tau ^-(\hat{m}))} \mathcal{F} ^-)_{\hat{m} \in \hat{\Gamma}}$\,), there exists $ A,a>0  $  such that $\forall t \geq 0,\,\hat{m}\in\hat{\Gamma}$ \,and \, $ e\in (\, T_{\xi^+(\tau^+(\hat{m}))} \mathcal{F}^+)_{\hat{m} \in \hat{\Gamma} }$ (\,\hbox{resp.} $e \in (\,T_ {\xi ^-{(\tau ^-(\hat{m}))}} \mathcal{F} ^-)_{\hat{m} \in \hat{\Gamma}}$\,):\\ $ {\|e\|}_{\phi_{-t} \hspace{0.06cm} (\hat{m})} \leq Ae^{-at} {\|e\|}_{\hat{m}}$ ( \hbox{resp.}$ {\|e\|}_{\phi _t \hspace{0.06cm} (\hat{m})} \leq Ae^{-at} {\|e\|}_{\hat{m}}$ ) .
 \end{enumerate}
 The maps $\xi^{\pm}$ are called its \emph{Anosov maps}.
\end{definition}

Using \cite[Definition 4.8,\,4.9]{gw}, it can be concluded that  $(\xi ^+, \xi ^-)$ is said to be compatible if 
\begin{enumerate}
    \item $\forall t \in \partial_{\infty} \Gamma,\: \st(\xi^+(t)) \cap \st(\xi^-(t)) $ is a parabolic subgroup of $G$ and 

\item $\forall t^+ \neq t^- \in \partial_{\infty} \Gamma, (\xi^+(t^+), \xi^-(t^-) ) \in \chi= 
G/{(P^+\cap P^-)} $.
\end{enumerate}

In \cite{gw}, Guichard and Weinhard showed that under certain conditions, only the  existence of equivariant continuous compatible maps $(\xi ^+, \xi ^-)$ implies the Anosov property of a representation. This is described in details in \propref{1.3} and \propref{1.4}. 

\begin{proposition}\label{1.3}
    \cite[Prop. 4.10]{gw}
    Let $\rho: \Gamma \to \SL(n, \C)$ be a representation. Suppose that
\begin{enumerate}
   \item $\rho$ is irreducible.

   \item $\rho$ admits a compatible pair $\xi ^\pm : \partial_{\infty} \Gamma \to  {\SL(n, \C)/{Q^\pm}}$ of continuous $\rho$-equivariant maps
\end{enumerate}
   Then $\rho$ is $(Q^+,Q^-)$-Anosov and $\xi^\pm$ are its Anosov maps.
\end{proposition}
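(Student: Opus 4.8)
The plan is to verify the two clauses of \cite[Definition 2.10]{gw} directly, using the flow space $\hat{\Gamma}$ of \cite[Theorem 2.9]{gw}. Clause~(1), transversality of $(\xi^+(t^+),\xi^-(t^-))$ whenever $t^+\ne t^-$, is literally the second compatibility condition on the pair $\xi^\pm$, so it holds by hypothesis~(2) and the whole content is clause~(2). To set up a convenient family of norms, first note that, since the orbit maps $\Gamma\to\hat{\Gamma}$ are quasi-isometries, the quotient $M:=\Gamma\backslash\hat{\Gamma}$ is compact (it is the geodesic flow space of $\Gamma$), and the trivial bundle $\hat{\Gamma}\times\C^n$ with the action $\gamma\cdot(\hat m,v)=(\gamma\hat m,\rho(\gamma)v)$ descends to a flat vector bundle $V\to M$. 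Choosing any continuous Hermitian metric on $V$ by a partition of unity and lifting it yields a $\Gamma$-equivariant continuous family of Hermitian inner products on $\hat{\Gamma}\times\C^n$; pushing this through the canonical isomorphisms $T_{[\ell]}\P(\C^n)\cong\hom(\ell,\C^n/\ell)$ and the analogous one for $\P((\C^n)^{\ast})$ --- where transversality of $\xi^\pm$ is used to identify $\C^n/\xi^+(\tau^+(\hat m))$ with $\xi^-(\tau^-(\hat m))$ --- gives continuous equivariant families of norms on $(T_{\xi^+(\tau^+(\hat m))}\mathcal F^+)_{\hat m}$ and on $(T_{\xi^-(\tau^-(\hat m))}\mathcal F^-)_{\hat m}$. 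By the ``for one (hence any)'' clause it then suffices to check the exponential contraction estimate of clause~(2) for these.

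Next I would reformulate that estimate as a domination statement. Since $\tau^\pm$ is constant along $\mathbb R$-orbits, the line sub-bundle $\hat m\mapsto\xi^+(\tau^+(\hat m))$ and the hyperplane sub-bundle $\hat m\mapsto\ker\xi^-(\tau^-(\hat m))$ of $\hat{\Gamma}\times\C^n$ descend to continuous $\Gamma$-equivariant sub-bundles $\bar L,\bar P\subset V$ that are invariant under the lifted flow $\tilde{\phi}_t(\hat m,v)=(\phi_t\hat m,v)$; and because $(\tau^+,\tau^-)(\hat m)\in\partial_{\infty}\Gamma^{(2)}$, transversality gives $V=\bar L\oplus\bar P$ with $\dim\bar L=1$. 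Unwinding the identifications of the first paragraph, clause~(2) for $\xi^+$ --- and, given the invariant transverse splitting, equivalently for $\xi^-$ --- becomes exactly the assertion that $V=\bar L\oplus\bar P$ is a dominated splitting for the flow $\tilde{\phi}_t$ over the compact base $M$, with $\bar L$ dominating $\bar P$. So the proposition reduces to producing this uniform exponential gap.

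This is the heart of the matter, and it is where irreducibility is used. One first shows that $\rho$ is \emph{dynamics-preserving}: for every infinite-order $\gamma\in\Gamma$, with attracting and repelling fixed points $\gamma^\pm\in\partial_{\infty}\Gamma$, the element $\rho(\gamma)$ is proximal on $\P(\C^n)$ with attracting line $\xi^+(\gamma^+)$ and repelling hyperplane $\ker\xi^-(\gamma^-)$. The mechanism is that $\gamma$ has north--south dynamics on $\partial_{\infty}\Gamma$, so continuity and $\rho$-equivariance of $\xi^+$, together with the corresponding statement for $\xi^-$ under the backward iterates of $\rho(\gamma)$, constrain the limits $\lim_n\rho(\gamma)^{\pm n}\cdot\xi^\pm(t)$; since, by irreducibility, the images of $\xi^\pm$ are non-degenerate --- the lines $\xi^+(t)$ span $\C^n$ and the hyperplanes $\xi^-(t)$ have zero common intersection, these spans and intersections being $\rho(\Gamma)$-invariant --- these constraints, fed into an analysis of the generalized eigenspaces of $\rho(\gamma)$ at the eigenvalues of extreme modulus, force the stated proximality. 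With dynamics-preservation in hand, the uniform gap of the previous paragraph follows from the coarse hyperbolicity of $\hat{\Gamma}$: arguing by contradiction, a failure of the gap would produce --- via compactness of $M$ and the shadowing/closing properties of the flow built into \cite[Theorem 2.9]{gw} --- an orbit segment that simultaneously shadows a periodic orbit and violates exponential domination of $\bar P$, contradicting proximality of the corresponding $\rho(\gamma)$. This whole chain is carried out in \cite[Section 4]{gw}, and in practice one simply invokes \cite[Prop.~4.10]{gw}.

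The step I expect to be the genuine obstacle is precisely this last passage, from pointwise dynamics-preservation to the \emph{uniform} exponential gap: a continuous flow-invariant splitting over a compact base need not be dominated (a constant splitting for a rotation flow is not), so something beyond soft compactness is required, namely an Anosov-type closing lemma for $\hat{\Gamma}$. Irreducibility is indispensable already in the dynamics-preservation step --- a reducible representation with a trivial direct summand carries perfectly good continuous equivariant transverse maps yet fails to be dynamics-preserving, hence is not Anosov --- which is exactly why it is a hypothesis of the proposition.
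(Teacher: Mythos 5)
This proposition is stated in the paper as a quoted result, \cite[Prop.\ 4.10]{gw}; the paper supplies no proof of its own, so your write-up has to stand on its own as an argument. Read that way, it has a genuine gap, and it is exactly the one you flag yourself. The reduction in your first two paragraphs is correct and standard: transversality is literally hypothesis (2), and the contraction clause is equivalent to domination of the splitting $V=\bar L\oplus\bar P$ of the flat bundle over the compact quotient $\Gamma\backslash\hat{\Gamma}$. But the mechanism you propose for the uniform exponential gap --- pointwise proximality of each $\rho(\gamma)$ plus ``shadowing/closing properties of the flow built into Theorem 2.9'' --- does not close. Theorem 2.9 provides only a proper hyperbolic metric space with an $\R$-action whose orbits are quasi-isometric embeddings; no Anosov closing lemma or shadowing property is asserted there or available off the shelf for $\hat{\Gamma}$. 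Moreover, even granting density of periodic orbits, passing from proximality of each individual $\rho(\gamma)$ to a gap that is uniform over all periodic orbits requires uniform control of the proximality constants (spectral radius versus second eigenvalue modulus, and the angle between attracting line and repelling hyperplane), and that uniformity is essentially the statement being proved. So the decisive step is circular as written, and ``one simply invokes \cite[Prop.~4.10]{gw}'' is doing all the work.

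For the record, the argument in \cite{gw} does not pass through periodic orbits or proximality at all. Irreducibility enters through the two linear-algebra facts you correctly identify --- the lines $\xi^+(t)$, $t\in\partial_{\infty}\Gamma$, span $\C^n$, and the hyperplanes $\xi^-(t)$ have trivial common intersection, since both the span and the intersection are $\rho(\Gamma)$-invariant --- and the uniform contraction is then extracted by a direct compactness argument on divergent sequences in $\Gamma$: were the gap to fail, one would obtain points $\hat m_n$ in a fixed compact fundamental domain and elements $\gamma_n\to\infty$ for which suitably renormalized endomorphisms $\rho(\gamma_n)/\|\rho(\gamma_n)\|$ converge to a nonzero singular limit whose image and kernel are constrained, via continuity and equivariance of $\xi^{\pm}$, to sit inside subspaces that the spanning/trivial-intersection properties and transversality forbid. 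No closing lemma is needed. If you want a self-contained proof rather than the citation, that compactness step is the one to write out; the ``dynamics-preserving'' discussion, while true and relevant to the converse direction, is neither necessary nor sufficient here.
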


\begin{proposition}\label{1.4}
  \cite[Theorem 4.11]{gw}  
  Let $\rho: \Gamma \to G $ be a Zariski dense representation where $G$ is an affine complex connected semisimple algebraic group and $(P^+,P^-)$ is a pair of opposite parabolic subgroups of $G$. Suppose that $\rho$ admits a pair of equivariant continuous compatible maps $\xi^{+}, \hbox{resp. } \xi^{-}: \partial_{\infty} \Gamma \to  G/{P^+}, \hbox{ resp.  } G/{P^-} $. Then the representation $\rho$ is $(P^+,P^-)$-Anosov and $\xi^\pm$ are its Anosov maps.
\end{proposition}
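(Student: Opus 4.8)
Since condition (1) of \defref{} (the transversality requirement) is literally the second compatibility clause already assumed, the entire content is to upgrade the compatible limit maps into the exponential contraction of condition (2). The plan is to reduce this to the irreducible $\SL(N,\C)$-case recorded in \propref{1.3}, by pushing $\rho$ through a carefully chosen irreducible representation of $G$ into a special linear group.

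\textbf{Step 1 (reduction representation).} Writing $P^+=P_S$ for the subset $S$ of simple roots spanning the Levi $L=P^+\cap P^-$, I would choose a dominant weight $\lambda$ supported precisely on the complementary simple roots $\Delta\setminus S$ (the sum of the corresponding fundamental weights). This yields an irreducible representation $\tau:G\to\SL(V)$, $V=\C^N$, whose highest weight line $D\in\P(V)$ has stabilizer exactly $P^+$, while the complementary $G$-invariant hyperplane $H=\bigoplus_{\mu\neq\lambda}V_\mu$ has stabilizer exactly $P^-$; since $G$ is semisimple we have $\det\tau\equiv 1$, so $\tau$ indeed lands in $\SL(V)$. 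The orbit maps then give $G$-equivariant closed embeddings $\iota^+:G/P^+\hookrightarrow\P(V)\cong\SL(N,\C)/Q^+$ and $\iota^-:G/P^-\hookrightarrow\P(V^\ast)\cong\SL(N,\C)/Q^-$ (the Plücker-type picture behind the identifications of $\mathcal F^\pm$). Because every $g$ acts invertibly on $V=D\oplus H$, the whole orbit $\chi=G/L$ maps to $\SL(V)$-transverse pairs (line not contained in hyperplane), so $\iota^\pm$ carry transverse pairs to transverse pairs.

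\textbf{Step 2 (transport the data; role of Zariski density).} Post-composing, set $\eta^\pm=\iota^\pm\circ\xi^\pm:\partial_\infty\Gamma\to\SL(N,\C)/Q^\pm$. These are continuous and $(\tau\circ\rho)$-equivariant, and both compatibility clauses for $(\xi^+,\xi^-)$ pass through the $G$-equivariant maps $\iota^\pm$ to give compatibility of $(\eta^+,\eta^-)$. Zariski density enters decisively here: since $\overline{\rho(\Gamma)}^{\,\mathrm{Zar}}=G$ and $\tau$ is an irreducible $G$-representation, the subgroup $\tau(\rho(\Gamma))$ has Zariski closure $\tau(G)$, which acts irreducibly on $V$; as having an invariant subspace is a Zariski-closed condition, $\tau\circ\rho$ is an irreducible representation into $\SL(N,\C)$. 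This is exactly the point at which mere $G$-irreducibility of $\rho$ would not suffice, explaining the stronger hypothesis here compared with \propref{1.3}.

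\textbf{Step 3 (apply \propref{1.3} and descend).} By \propref{1.3}, $\tau\circ\rho$ is $(Q^+,Q^-)$-Anosov with Anosov maps $\eta^\pm$. It then remains to pull the Anosov inequalities back to $\rho$: since $\iota^+$ is a $G$-equivariant closed immersion, $d\iota^+$ realizes the bundle $(T_{\xi^+(\tau^+(\hat m))}G/P^+)_{\hat m\in\hat\Gamma}$ as a $G$-invariant subbundle of $(T_{\eta^+(\tau^+(\hat m))}\P(V))_{\hat m\in\hat\Gamma}$. Restricting a continuous equivariant family of norms from the ambient bundle, the contraction estimate of condition (2) for $\tau\circ\rho$ restricts verbatim, with the same constants $A,a$, to this subbundle, and symmetrically on the minus side; together with the transversality secured in Step 1 this verifies the Anosov definition for $\rho$, so $\rho$ is $(P^+,P^-)$-Anosov with Anosov maps $\xi^\pm$. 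The one delicate point is Step 1 — exhibiting a single irreducible representation that simultaneously realizes $P^+$ as a line-stabilizer and $P^-$ as the stabilizer of the complementary hyperplane, with flag-transversality matching non-containment of line in hyperplane; once that is in place, the equivariance, the irreducibility coming from Zariski density, and the subbundle restriction of the contraction are all formal.
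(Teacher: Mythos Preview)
The paper itself does not prove this proposition; it is quoted verbatim from \cite[Theorem 4.11]{gw} in the preliminaries and used as a black box. There is therefore no ``paper's own proof'' to compare against.

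That said, your argument is essentially the one Guichard and Wienhard give in the original source: they reduce Theorem 4.11 to the $\SL(N,\C)$ case (their Proposition 4.10, which is \propref{1.3} here) by choosing an irreducible representation $\tau:G\to\SL(V)$ realizing $P^\pm$ as stabilizers of a line and a complementary hyperplane --- exactly the construction the present paper alludes to when it cites \cite[Remark 4.12]{gw} --- and then invoking the functoriality of the Anosov condition under such homomorphisms (their Proposition 4.4) to transport the conclusion back to $G$. Your Steps 1--3 reproduce this outline faithfully, and your observation about where Zariski density is used is exactly the point.

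One slip: in Step 1 you call $H=\bigoplus_{\mu\neq\lambda}V_\mu$ the ``complementary $G$-invariant hyperplane''. It is not $G$-invariant (otherwise $V=D\oplus H$ would contradict irreducibility of $\tau$); what is true, and what you need, is that $H$ is $P^-$-stable with stabilizer exactly $P^-$. This does not affect the argument, but the wording should be corrected.
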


\begin{theorem}\label{1.7}
    \cite[Theorem 5.13]{gw} The set of $(P^+,P^-)$-Anosov representations of $\Gamma$ into $G$, denoted by $Hom_{(P^+,P^-)} (\Gamma, G)$,  is open in $Hom (\Gamma, G)$.
    \end{theorem}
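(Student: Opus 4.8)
The plan is to deduce openness from the \emph{structural stability} of the contraction condition built into the definition of an Anosov representation. The decisive observation is that the flow $\phi_t$ on $\hat\Gamma$ supplied by the preceding structure theorem does not depend on the representation; only the flat bundle over which it is lifted does. Since exponential contraction is an open condition and the relevant base is compact, a small perturbation of $\rho$ perturbs the lifted flow only slightly and hence preserves the contraction. Concretely, first I would recast the Anosov property as a dynamical condition on an associated bundle. Given $\rho\in Hom_{(P^+,P^-)}(\Gamma,G)$ with Anosov maps $\xi^\pm$, form the flat bundle
\[
  \mathcal E_\rho=\hat\Gamma\times_\rho(\mathcal F^+\times\mathcal F^-)\longrightarrow \hat\Gamma/\Gamma,
\]
over which $\phi_t$ lifts, via the flat structure, to a flow $\Phi^\rho_t$. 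The pair $(\xi^+\circ\tau^+,\xi^-\circ\tau^-)$ defines a continuous $\Phi^\rho_t$-invariant section $\sigma_\rho$ landing in the transverse locus $\chi\subset\mathcal F^+\times\mathcal F^-$, and the inequalities of condition~(2) say precisely that $\sigma_\rho$ is a \emph{uniformly normally hyperbolic} invariant section: the linearized flow contracts $T\mathcal F^+$ along $\sigma_\rho$ under $\phi_{-t}$ and contracts $T\mathcal F^-$ under $\phi_t$, at uniform exponential rates $A,a$. Cocompactness of the $\Gamma$-action on $\hat\Gamma$ is what makes these constants uniform over $\hat\Gamma/\Gamma$.

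Second, I would invoke persistence of the normally hyperbolic invariant section. A representation $\rho'$ close to $\rho$ in $Hom(\Gamma,G)$ is close to $\rho$ on the generators, hence on a compact fundamental domain, so it yields a flat connection close to that of $\rho$ and therefore a flow $\Phi^{\rho'}_t$ uniformly close to $\Phi^\rho_t$ over the compact base. The cone-field, or graph-transform, argument of Labourie, which is the cocycle version of the Hirsch--Pugh--Shub persistence theorem, then produces for every such $\rho'$ a nearby continuous $\Phi^{\rho'}_t$-invariant section $\sigma_{\rho'}$ enjoying the same uniform exponential contraction. Transversality, being the open condition that the image lie in $\chi$, is preserved as well.

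Finally, I would extract the Anosov maps for $\rho'$. By construction $\sigma_{\rho'}$ is $\phi_t$-invariant and $\Gamma$-equivariant, so, being constant along $\mathbb R$-orbits, it descends through the identification $\hat\Gamma/\mathbb R\cong\partial_\infty\Gamma^{(2)}$ to continuous $\rho'$-equivariant maps $\xi^\pm_{\rho'}\colon\partial_\infty\Gamma\to\mathcal F^\pm$ satisfying both clauses of the definition of $(P^+,P^-)$-Anosov representation. Hence $\rho'\in Hom_{(P^+,P^-)}(\Gamma,G)$, and a whole neighborhood of $\rho$ lies in the Anosov locus, which is the desired openness.

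The hard part will be the persistence step: showing that the uniform exponential contraction survives the perturbation and that the perturbed invariant section remains genuinely continuous and equivariant with the stated rates. This requires controlling the linearized cocycle on the tangent bundles $T\mathcal F^\pm$ along the moving section, uniformly over the compact quotient even though the underlying space $\hat\Gamma$ is noncompact, and verifying that $\rho'\mapsto\sigma_{\rho'}$ can be taken continuous. Making every constant uniform and independent of $\rho'$ throughout a neighborhood is the technical crux; once that is in hand, the recovery of the equivariant boundary maps is formal.
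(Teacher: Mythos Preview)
The paper does not prove this statement at all: \thmref{1.7} is quoted verbatim from \cite[Theorem~5.13]{gw} and used as a black box, so there is no in-paper argument to compare your proposal against. Your sketch is, in outline, the strategy of the original Guichard--Wienhard proof (itself an adaptation of Labourie's argument): reinterpret the Anosov condition as uniform hyperbolicity of a flow-invariant section of an associated flat $(\mathcal F^+\times\mathcal F^-)$-bundle over the cocompact quotient $\hat\Gamma/\Gamma$, then use a cone-field/graph-transform persistence argument to show the hyperbolic section survives small perturbations of the flat connection, and finally read off the perturbed boundary maps. So your proposal is correct in spirit and matches the cited source rather than anything in the present paper; the honest caveats you flag (uniform control of constants over a neighborhood, continuity of $\rho'\mapsto\sigma_{\rho'}$) are exactly where the real work in \cite{gw} lies.
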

    \begin{remark}\label{1.8}
    Since $\Gamma=F_k$ or  $\pi_1(\Sigma_g)$ is finitely generated, in $Hom(\Gamma,G)$,  the compact-open topology coincides with  the topology of pointwise convergence, cf. \cite[Section 4.2.4]{ka}. We will call this topology of pointwise convergence as      complex topology. Therefore, in the above theorem, these two topologies are same.
         \end{remark}

\subsection{Varieties of Representations}
Let $\Gamma$ be a finitely generated group and $G$ be an affine complex algebraic group, then $Hom(\Gamma,G)$ is an affine algebraic set. In our case, $\Gamma = F_k$ or $\pi_1(\Sigma_g) $ is finitely generated and $G$ is an affine complex connected semisimple algebraic group. Hence $Hom(\Gamma,G)$ is an affine variety (here affine variety means affine algebraic set irrespective of irreducibility).

\begin{example}
    Take $\Gamma=\pi_1(\Sigma_2)=\langle a_1,b_1,a_2,b_2 ~ | ~ [a_1,b_1][a_2,b_2]=1 \rangle, G=\SL(2,\C)$. Each point $\rho: \Gamma \to \SL(2,\C)$ of $Hom(\Gamma,\SL(2,\C))$ is represented by;
    
$\rho(a_1)=\begin{bmatrix}
x_1 & x_2 \\
x_3 & x_4
\end{bmatrix}, \rho(b_1)=\begin{bmatrix}
x_5 & x_6 \\
x_7 & x_8
\end{bmatrix}, \rho (a_2)=\begin{bmatrix}
x_9 & x_{10} \\
x_{11} & x_{12}
\end{bmatrix}, \rho (b_2)=\begin{bmatrix}
x_{13} & x_{14 }\\
x_{15} & x_{16}  
\end{bmatrix}$
 such that $$x_1x_4-x_2x_3=x_5x_8-x_6x_7=x_9x_{12}-x_{10}x_{11}=
 x_{13}x_{16}-x_{14}x_{15}=1 $$  and  
$$ \rho(a_1)\,\rho(b_1)\,{{\rho(a_1)}^{-1}}\,{{\rho(b_1)}^{-1}}\,\rho(a_2)\,\rho(b_2)\,{{\rho(a_2)}^{-1}}\,{{\rho(b_2)}^{-1}}=\begin{bmatrix}
1 & 0\\
0 & 1  
\end{bmatrix}.$$
Therefore, $Hom(\pi_1(\Sigma_2), \SL(2,\C))\subset \C^{16}$ is an affine algebraic set.

If $\Gamma=F_k $ with free generators say $a_1,\ldots,a_k$, then the map $\rho \to (\rho(a_1), \ldots,\rho(a_k))$ for any $\rho \in Hom(F_k,G) $ yields an isomorphism $Hom(F_k,G) \cong G^k$.
\end{example}

\begin{proposition}\label{1.10}
    \cite[Prop. 27]{si} The set of all irreducible representations of $\Gamma$ in $G$, denoted by $Hom^i (\Gamma, G)$,  is Zariski open in $Hom(\Gamma, G)$.
\end{proposition}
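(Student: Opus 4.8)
The plan is to establish the complementary statement: the set of \emph{reducible} (that is, not irreducible) representations is Zariski closed in $Hom(\Gamma,G)$. Fix a finite generating set $\gamma_1,\dots,\gamma_m$ of $\Gamma$, so that $\rho\mapsto(\rho(\gamma_1),\dots,\rho(\gamma_m))$ identifies $Hom(\Gamma,G)$ with a Zariski closed subset of $G^m$. By definition $\rho$ fails to be irreducible precisely when $\rho(\Gamma)$ is contained in some proper parabolic subgroup of $G$; since every proper parabolic lies inside a maximal one, this is equivalent to $\rho(\Gamma)$ lying in some maximal proper parabolic. As $G$ is connected semisimple, its parabolic subgroups form only finitely many conjugacy classes (indexed by the subsets of a fixed system of simple roots), so it suffices to prove that for each maximal proper parabolic $P\subseteq G$ the locus
\[
\mathcal R_P=\{\rho\in Hom(\Gamma,G):\ \rho(\Gamma)\subseteq gPg^{-1}\ \text{for some}\ g\in G\}
\]
is Zariski closed; the reducible locus is then the finite union $\bigcup_P\mathcal R_P$.

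Next I would realize $\mathcal R_P$ as the image of a closed incidence variety under a proper projection. Set
\[
Z_P=\bigl\{(\rho,gP)\in Hom(\Gamma,G)\times G/P:\ g^{-1}\rho(\gamma_i)\,g\in P\ \text{for all}\ i=1,\dots,m\bigr\}.
\]
The defining condition does not depend on the choice of coset representative, since replacing $g$ by $gp$ with $p\in P$ conjugates each $g^{-1}\rho(\gamma_i)g$ by $p$ and thus preserves membership in $P$; so $Z_P$ is well defined. It is Zariski closed: for the morphism $\mu\colon Hom(\Gamma,G)\times G\to G^m$, $\mu(\rho,g)=\bigl(g^{-1}\rho(\gamma_1)g,\dots,g^{-1}\rho(\gamma_m)g\bigr)$, the preimage $\mu^{-1}(P^m)$ is a closed, right-$P$-invariant subset of $Hom(\Gamma,G)\times G$, and it descends along the quotient $G\to G/P$ to exactly $Z_P$. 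Since $P$ is parabolic, $G/P$ is a projective variety, so the projection $Hom(\Gamma,G)\times G/P\to Hom(\Gamma,G)$ is a closed map, whence $\mathcal R_P$ --- the image of the closed set $Z_P$ --- is Zariski closed. Taking the finite union over conjugacy classes of maximal proper parabolics then proves the proposition.

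I expect the one point needing care to be the descent step: checking that a closed, $P$-saturated subset of $Hom(\Gamma,G)\times G$ has Zariski closed image in $Hom(\Gamma,G)\times G/P$. This holds because $G\to G/P$, and hence $Hom(\Gamma,G)\times G\to Hom(\Gamma,G)\times G/P$, is faithfully flat and in particular an open surjection: the image of the open, $P$-saturated complement is open, and as our set is $P$-saturated its image is exactly the complement of that open image, hence closed. The remaining ingredients --- the reduction to maximal parabolics, the finiteness of conjugacy classes of parabolics, and the completeness of $G/P$ --- are standard structure theory of connected semisimple algebraic groups. For the special case $G=\SL(n,\C)$ one can bypass this machinery altogether: $\rho$ is irreducible iff the linear span of $\{\rho(w):w\in\Gamma\}$ is all of $M_n(\C)$, equivalently iff a finite matrix assembled from the images of words of length at most some bound depending only on $n$ has rank $n^2$ --- patently a Zariski open condition.
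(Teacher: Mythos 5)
Your argument is correct and is essentially the standard proof of this fact, which is exactly what the cited reference (Sikora, Prop.~27) does: reduce to finitely many conjugacy classes of (maximal) proper parabolics, exhibit the reducible locus for each class as the image of a closed incidence variety in $Hom(\Gamma,G)\times G/P$, and use completeness of $G/P$ to conclude that this image is closed. The paper itself gives no proof beyond the citation, so there is no divergence to report; your descent step and the $\SL(n,\C)$ shortcut via Burnside are both sound.
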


\begin{proposition}\label{1.11}
\cite[Prop. 8.2]{ab}
    The set of all Zariski dense representations of $\Gamma$ in $G$ where $G$ is connected and reductive, denoted by $Hom^{zd} (\Gamma, G)$, is Zariski open in $Hom(\Gamma, G)$. 
\end{proposition}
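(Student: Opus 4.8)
The plan is to prove the equivalent statement that the complement $Z = Hom(\Gamma,G)\setminus Hom^{zd}(\Gamma,G)$, consisting of the representations that are \emph{not} Zariski dense, is Zariski closed. By definition $\rho\in Z$ exactly when the Zariski closure $\overline{\rho(\Gamma)}$ is a proper closed subgroup of $G$; since $G$ is connected, this happens if and only if $\overline{\rho(\Gamma)}$ is contained in some \emph{maximal} proper closed subgroup $M$. Fixing generators $\gamma_1,\dots,\gamma_k$ of $\Gamma$ (the standard generators for $\pi_1(\Sigma_g)$, the free basis for $F_k$), a representation is recorded by the tuple $(\rho(\gamma_1),\dots,\rho(\gamma_k))\in G^k$, and each word evaluation $\rho\mapsto\rho(w)$ is a morphism; this is what allows the containment conditions to be read as algebraic conditions on the tuple. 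The whole difficulty is thus to show that the a priori \emph{countable} union $Z=\bigcup_M Z_M$, where $Z_M=\{\rho:\rho(\Gamma)\subseteq gMg^{-1}\text{ for some }g\in G\}$, is in fact a \emph{finite} union of Zariski closed sets.

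First I would treat the positive-dimensional maximal subgroups. For such $M$ the locus $Z_M$ is the image, under projection to $Hom(\Gamma,G)$, of the incidence family $\{(\rho,gN_G(M))\in Hom(\Gamma,G)\times G/N_G(M):\rho(\gamma_i)\in gMg^{-1}\ \forall i\}$, which is closed because the membership conditions are algebraic. A maximal proper closed subgroup of positive dimension in a reductive group is either parabolic or reductive, so $G/N_G(M)$ is a complete (projective) homogeneous space; the projection is therefore proper and its image $Z_M$ is Zariski closed. The decisive structural input here is finiteness of the index set: there are finitely many conjugacy classes of parabolic subgroups and, by the classification of maximal reductive subgroups, finitely many classes of positive-dimensional reductive maximal subgroups.

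The remaining, and to my mind hardest, contributions to $Z$ come from representations whose image is \emph{finite} and not contained in any positive-dimensional proper subgroup. Here the plan is to invoke Jordan's theorem: a finite subgroup $F\subseteq G$ has an abelian normal subgroup $A$ of index bounded by a constant depending only on $G$. In characteristic zero the elements of $A$ are semisimple and commute, so $A$ lies in a maximal torus; if $A$ is noncentral then $C_G(A)$ is a proper positive-dimensional subgroup, $F\subseteq N_G(A)$, and $\rho$ already lies in one of the loci of the previous paragraph. Otherwise $A$ is central and $|F|$ is bounded, whence finite subgroups of bounded order fall into finitely many conjugacy classes. In either case only finitely many further loci $Z_M$ occur, and for finite $M$ the locus $Z_M$ is cut out by the algebraic condition that $(\rho(\gamma_1),\dots,\rho(\gamma_k))$ satisfies, up to simultaneous conjugacy, the finite list of relations defining $M$, hence is a finite union of conjugates of a fixed closed subvariety of $G^k$ and is Zariski closed. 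Granting these finiteness statements, $Z$ is a finite union of closed sets, so $Z$ is Zariski closed and $Hom^{zd}(\Gamma,G)$ is Zariski open.

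The main obstacle is precisely this control of the discrete part: upgrading the naive countable union over all proper closed subgroups to a genuinely finite one, and in particular ensuring that central directions do not produce an infinite family of non-dense representations accumulating Zariski-densely (for a central torus one can approximate by roots of unity, and the conclusion can fail; for the semisimple groups relevant here, notably $G=\SL(n,\C)$, this pathology does not arise and Jordan's theorem supplies exactly the needed finiteness). As an alternative organization of the same idea, one could first invoke Chevalley's theorem to see that $Z$ is constructible, being the image of the closed incidence variety over a parameter space of subgroups, and then prove closedness by stability under specialization; in that route the crux is again a uniform bound guaranteeing that only finitely many subgroup types occur, equivalently an ``escape from proper subvarieties'' estimate bounding the word length needed to witness Zariski density.
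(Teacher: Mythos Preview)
The paper does not give its own proof here; the proposition is quoted from A'Campo--Burger without argument, so there is nothing in the paper to compare against. That said, your sketch contains a genuine gap.

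Your assertion that $G/N_G(M)$ is projective for every positive-dimensional maximal closed subgroup $M$ is false in the non-parabolic case. If $M$ is parabolic then $N_G(M)=M$ and $G/M$ is projective, but if $M$ is reductive then $N_G(M)$ is reductive as well and $G/N_G(M)$ is affine (Matsushima's criterion), not complete. Concretely, take $G=\SL(2,\C)$ and $M=N_G(T)$, the normalizer of a maximal torus; this is a maximal closed subgroup, $N_G(M)=M$, and $G/M$ is $\P^2$ with a conic removed. Your projection from the incidence variety is therefore not proper, and $Z_M$ genuinely fails to be closed: already for a single generator, $\bigcup_{g} gN_G(T)g^{-1}$ is exactly the set of semisimple elements of $\SL(2,\C)$, whose Zariski closure contains the unipotents. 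The proposition survives because the \emph{union} $Z=\bigcup_M Z_M$ is closed --- a family leaving one $Z_M$ specializes into another (in the example, tuples lying in a conjugate of $N_G(T)$ can degenerate to tuples lying in a Borel) --- but establishing this is precisely the specialization argument you relegate to an ``alternative organization'' at the end, and it is not a cosmetic rewrite of your main line: it is the missing step. Your side observation that the proposition, read literally for connected \emph{reductive} $G$, fails once the center has positive dimension is correct; only the semisimple case actually used in the paper is valid.
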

In our case, $G$ is an affine complex connected  semisimple algebraic group and its semisimplicity implies it is reductive.
\begin{remark}\label{1.12}
Since the complex topology is finer than the Zariski topology,
     $Hom^i(\Gamma, G)$ and $Hom^{zd} (\Gamma, G)$ are open in $Hom(\Gamma, G)$ in Zariski topology  implies that they are also open in the  complex topology in $Hom(\Gamma, G)$.
\end{remark}

\begin{corollary}\label{1.13}
 Let $\Gamma=F_k$ or $\pi_1(\Sigma_g)$ where $  g\geq2$. Then the set of  $(Q^+, Q^-)$-Anosov irreducible representations of $\Gamma$ into $\SL(n , \C)$, denoted by   ${Hom_{(Q^+,Q^-)}^i (\Gamma, \SL(n,\C))}$,  is open in $Hom (\Gamma,\SL(n,\C))$,  as well as in $Hom^i (\Gamma,\SL(n,\C))$ in the complex topology.

\end{corollary}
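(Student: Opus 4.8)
The plan is to realize $Hom_{(Q^+,Q^-)}^i(\Gamma,\SL(n,\C))$ as a finite intersection of sets already known to be open in $Hom(\Gamma,\SL(n,\C))$ in the complex topology, and then use the elementary facts that a finite intersection of open sets is open and that an open subset of the ambient space is open in the subspace topology of any set containing it.

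First I would record the tautological identity
$$Hom_{(Q^+,Q^-)}^i(\Gamma,\SL(n,\C))=Hom_{(Q^+,Q^-)}(\Gamma,\SL(n,\C))\cap Hom^i(\Gamma,\SL(n,\C)),$$
which is immediate from the definitions: a representation lies in the left-hand set precisely when it is simultaneously $(Q^+,Q^-)$-Anosov and irreducible.

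Next I would control each factor separately. For the Anosov factor, \thmref{1.7} asserts that $Hom_{(P^+,P^-)}(\Gamma,G)$ is open in $Hom(\Gamma,G)$; specializing to $G=\SL(n,\C)$ and $(P^+,P^-)=(Q^+,Q^-)$, and invoking \remref{1.8} to identify the compact-open topology with the topology of pointwise convergence (the complex topology) for the finitely generated group $\Gamma=F_k$ or $\pi_1(\Sigma_g)$, we conclude that $Hom_{(Q^+,Q^-)}(\Gamma,\SL(n,\C))$ is open in the complex topology. For the irreducible factor, \propref{1.10} gives that $Hom^i(\Gamma,\SL(n,\C))$ is Zariski open, and since the complex topology is finer than the Zariski topology (\remref{1.12}), this set is open in the complex topology as well.

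Intersecting the two complex-open sets then shows that $Hom_{(Q^+,Q^-)}^i(\Gamma,\SL(n,\C))$ is open in $Hom(\Gamma,\SL(n,\C))$. Openness in $Hom^i(\Gamma,\SL(n,\C))$ follows at once: the set in question is contained in $Hom^i(\Gamma,\SL(n,\C))$, so being open in the larger space it is open in the subspace topology inherited by $Hom^i$. Since each step is either a direct appeal to a cited result or a routine point-set fact, I do not expect any genuine obstacle; the only thing requiring a little care is the bookkeeping that, via \remref{1.8} and \remref{1.12}, both factors are open in one and the same topology (the complex topology) before the intersection is taken.
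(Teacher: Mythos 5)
Your proposal is correct and follows essentially the same route as the paper: write $Hom_{(Q^+,Q^-)}^i(\Gamma,\SL(n,\C))$ as the intersection of $Hom_{(Q^+,Q^-)}(\Gamma,\SL(n,\C))$ and $Hom^i(\Gamma,\SL(n,\C))$, then invoke \thmref{1.7} and \remref{1.12} to see both are open in the complex topology. Your extra remarks on \remref{1.8} and on openness in the subspace topology of $Hom^i$ are just careful bookkeeping of what the paper leaves implicit.
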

\begin{proof}
    Note that, $${Hom_{(Q^+,Q^-)}^i (\Gamma,\SL(n,\C))}={{Hom_{(Q^+,Q^-)} (\Gamma,\SL(n,\C))} \cap {Hom^i (\Gamma,\SL(n,\C))}}.$$
    Since intersection of two open sets are open, using \thmref{1.7} and \remref{1.12}, we get our conclusion.
\end{proof}

Now we will associate an affine algebraic scheme with  $Hom(\Gamma,G)$  so that the set of closed points of this affine scheme coincides with $Hom (\Gamma,G)$. The following paragraph follows from \cite[Section 5 ]{si}. We will only mention the important results here.

Suppose $A$ is any commutative $\C$-algebra. Let $G(A)$ denotes the set of $\C$-algebra homomorphisms from $\C[G]$ to $A$. This is equipped with a group structure defined in \cite[Section 5 ]{si}. For example, $G(A)= \SL(n,A)$ for $G=\SL(n,\C).$ A commutative $\C$-algebra $R(\Gamma,G)$ is said to be a universal representation algebra of $\Gamma$ into $G$ and $\rho_U : \Gamma \to G(R(\Gamma,G))$ is a universal representation if for every commutative $\C$-algebra $A$ and every representation $\rho : \Gamma \to G(A)$, there exists a $\C$-algebra homomorphism $f: R(\Gamma,G) \to A$ inducing a representation $G(f):G(R(\Gamma,G)) \to G(A)$ such that $\rho = G(f)\circ \rho_U$. \cite[Lemma 26]{si} assures us that $R(\Gamma, G)$ and $\rho_U$ always exist, $R(\Gamma, G)$ is well defined up to an isomorphism of $\C$-algebras and $\rho_U : \Gamma \to G(R(\Gamma,G))$ is unique up to a composition with $G(f)$ where $f$ is a $\C$-algebra automorphism of $R(\Gamma,G).$ Denote $\mathcal{H}om (\Gamma, G)= Spec \:  R(\Gamma,G)$. By taking $A=\C$, it can be shown that there is a one to one correspondence between the points of $Hom(\Gamma,G)$ and the closed points of $\mathcal{H}om (\Gamma, G)$ i.e.
$$\{Hom(\Gamma,G)\}\xleftrightarrow {1-1}\{Closed \hspace{0.08cm} points \hspace{0.08cm} of \hspace{0.08cm}\mathcal{H}om (\Gamma, G)\} = \{MSpec \: R(\Gamma,G) \}.$$
Also, $R(\Gamma,G)/\sqrt{0} = \C[Hom(\Gamma,G)]$-coordinate ring of $Hom(\Gamma,G)$. From this relation, it can be said that the Zariski topology on $\mathcal{H}om (\Gamma, G)= Spec \hspace{0.08cm}   R(\Gamma,G)$ is homeomorphic to the Zariski topology on $Spec \hspace{0.08cm} \C [Hom(\Gamma,G)]$, see  \cite[page 23]{am}.

Note that, a point $x$ of an affine variety or an affine scheme $X$ is said to be simple if $x$ belongs to a unique irreducible component of $X$ and the dimension of that component is equal to the dimension of the Zariski tangent space to $X$ at $x$ (denoted by $T_x X$). We say that $\rho: \Gamma \to G$ is smooth if $\rho$ is  a simple point of the affine variety $Hom(\Gamma,G)$ and $\rho$ is said to be scheme smooth if the corresponding maximal ideal of $\rho$  in $\mathcal{H}om (\Gamma, G)$ (using the above $1-1$ correspondence) is a simple point in $\mathcal{H}om (\Gamma, G)$. If $\rho$ is scheme smooth, then it is smooth also. \cite[Section 9]{si}

Note that, $Hom(F_k,G)=G^ k$ (Cartesian product of $k$-copies of $G$) is a non-singular affine variety. Therefore every $\rho \in Hom(F_k,G) $ is smooth. By Proposition\,37 of \cite{si}, all irreducible representations of $\pi_1(\Sigma_g)$ in a reductive group $G$ are scheme smooth, hence smooth. From these facts, we have the following.

\begin{proposition} {\rm \cite{si}} \label{1.16}
    For $\Gamma=F_k$, resp. $\pi_1(\Sigma_g)$,  and every reductive group $G$, all irreducible representations $\rho:\Gamma \to G$ are smooth.
\end{proposition}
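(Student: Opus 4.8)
The plan is to handle the two groups separately: the free case is formal, and the surface‑group case is the content of \cite[Prop.~37]{si}, which I would reconstruct by Goldman's deformation‑theoretic argument. For $\Gamma=F_k$, evaluation on a fixed free basis gives an isomorphism of varieties $Hom(F_k,G)\cong G^{k}$; a linear algebraic group over $\C$ is non‑singular, hence so is $G^{k}$, and therefore every point of $Hom(F_k,G)$ — in particular every irreducible representation — is a simple point, and so is smooth.

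For $\Gamma=\pi_1(\Sigma_g)$ I would fix the standard one‑relator presentation $\langle a_1,b_1,\dots,a_g,b_g\mid \prod_{i=1}^{g}[a_i,b_i]\rangle$ and identify $Hom(\pi_1(\Sigma_g),G)$, together with the scheme structure coming from $R(\pi_1(\Sigma_g),G)$, with the scheme‑theoretic fibre $\mu^{-1}(e)$ of the commutator map $\mu\colon G^{2g}\to G$, $\mu(A_1,B_1,\dots,A_g,B_g)=\prod_i[A_i,B_i]$ (the ideal defining this fibre is exactly the ideal generated by the matrix entries of the relator, which is how $R(\pi_1(\Sigma_g),G)$ is built). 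By the standard identification (going back to Weil) the Zariski tangent space of this scheme at a representation $\rho$ is canonically the cocycle space $Z^{1}(\pi_1(\Sigma_g),\mathfrak g_{\mathrm{Ad}\,\rho})=\ker d\mu_\rho$, so it suffices to show that $\mu^{-1}(e)$ is smooth at $\rho$; scheme smoothness then forces $\rho$ to be a simple point, hence smooth in the variety sense. Moreover $\mu$ has image in the derived subgroup $G'$, a smooth closed subvariety of $G$, so the scheme‑theoretic fibre over $e$ is the same whether $\mu$ is viewed as a map to $G$ or to $G'$; hence it is enough to prove that $d\mu_\rho$ is surjective onto $T_eG'=\mathfrak g'$.

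This I would establish by a cohomological dimension count, and this is where irreducibility enters. Write $\mathfrak g=\mathfrak z\oplus\mathfrak g'$ with $\mathfrak z$ the centre and $\mathfrak g'$ semisimple, and fix a non‑degenerate $\mathrm{Ad}$‑invariant bilinear form on $\mathfrak g$ (the Killing form on $\mathfrak g'$ plus any non‑degenerate form on $\mathfrak z$, on which $\mathrm{Ad}$ acts trivially). Poincar\'e duality for the closed surface $\Sigma_g$ gives $H^{2}(\pi_1(\Sigma_g),\mathfrak g_{\mathrm{Ad}\,\rho})\cong H^{0}(\pi_1(\Sigma_g),\mathfrak g_{\mathrm{Ad}\,\rho})^{*}$, and combining this with the Euler‑characteristic identity $\dim H^{0}-\dim H^{1}+\dim H^{2}=(2-2g)\dim G$ and the formula $\dim B^{1}=\dim G-\dim H^{0}$ yields $\dim Z^{1}=(2g-1)\dim G+\dim H^{0}(\pi_1(\Sigma_g),\mathfrak g_{\mathrm{Ad}\,\rho})$. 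Now irreducibility of $\rho$ forces $H^{0}(\pi_1(\Sigma_g),\mathfrak g_{\mathrm{Ad}\,\rho})=\mathfrak g^{\rho(\Gamma)}=\mathfrak z$: if some $X\in\mathfrak g^{\rho(\Gamma)}$ were not central then $\rho(\Gamma)\subseteq Z_G(X)$, and using the Jordan decomposition of $X$ together with the facts that centralizers of tori are Levi subgroups and that centralizers of nonzero nilpotent elements lie in proper Jacobson--Morozov parabolics, one gets that $Z_G(X)$ lies in a proper parabolic subgroup of $G$, contradicting irreducibility. Therefore $\dim\ker d\mu_\rho=\dim Z^{1}=(2g-1)\dim G+\dim\mathfrak z$, so $\dim\,\mathrm{im}\,d\mu_\rho=2g\dim G-(2g-1)\dim G-\dim\mathfrak z=\dim G-\dim\mathfrak z=\dim\mathfrak g'$; since $\mathrm{im}\,d\mu_\rho\subseteq\mathfrak g'$ this is all of $\mathfrak g'$, $\mu$ is a submersion at $\rho$, and $\mu^{-1}(e)$ is smooth at $\rho$ of dimension $\dim Z^{1}$, completing the surface‑group case.

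I expect the main obstacle to be not the dimension count itself but the two structure‑theoretic inputs it rests on: the identification $\mathfrak g^{\rho(\Gamma)}=\mathfrak z$ for an irreducible representation into an \emph{arbitrary} reductive $G$ — this is precisely what must be squeezed out of the parabolic‑subgroup definition of irreducibility, and needs a little care about connectedness of centralizers when $G$ is not simply connected — and the verification that presenting $Hom(\pi_1(\Sigma_g),G)$ as a scheme‑theoretic commutator fibre and then passing from $G$ to $G'$ is compatible with the scheme structure on $R(\pi_1(\Sigma_g),G)$ entering the definition of scheme smoothness. Both are carried out in \cite[Prop.~37]{si}; the remaining Poincar\'e‑duality and Euler‑characteristic bookkeeping is routine.
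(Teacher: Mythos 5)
Your proposal is correct and takes essentially the same route as the paper: the free-group case is handled identically via $Hom(F_k,G)\cong G^k$ being non-singular, and for $\pi_1(\Sigma_g)$ the paper simply cites \cite[Prop.~37]{si} for scheme smoothness of irreducible representations, whose content is precisely the Goldman-style commutator-map/Poincar\'e-duality argument you reconstruct (with the key input $\mathfrak g^{\rho(\Gamma)}=\mathfrak z$ for irreducible $\rho$).
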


\section{Character Varieties of Anosov representations}\label{3}

In this section we will assume $G$ to be an affine complex connected semisimple algebraic group. In particular,  $\SL(n, \C)$ is also an affine complex connected semisimple algebraic group and any affine complex algebraic group is a Lie group.

In \cite[Remark 4.12]{gw}, it is shown that for a pair of opposite parabolic subgroups $(P^+, P^-)$ of $G$, an irreducible $G$-module $V$ can always be constructed such that $V$ admits a decomposition into a line $D$ and a hyperplane $H$ so that $P^+=\st_G(D)$ and $P^-=\st_G(H)$.

\begin{lemma}\label{ml} 
Let $\rho: \Gamma \to G$ be  a $(P^+, P^-)$-Anosov representation, and let $(\xi^+, \xi^-)$ be the associated Anosov maps. Then $\xi'^{+}, \hbox{resp. } \xi'^{-}: \partial_{\infty} \Gamma \to \mathcal F^+ =G/{P^+}, \hbox{ resp.  } \mathcal F^-  =G/{P^-}$,  defined for all $t \in \partial_{\infty} \Gamma$, 
$\xi'^{\pm}(t)=g \xi^{\pm }(t)$ is a compatible pair of $g \rho g^{-1}$-equivariant continuous maps, where $g \rho g^{-1}$ is given by
$$g \rho g^{-1}(v)=g \rho(v) g^{-1}, \hbox{ for all } v \in \Gamma, ~ g \in G. $$
 \end{lemma}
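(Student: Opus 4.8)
The plan is to verify the two defining properties of a compatible pair directly, transporting everything by the element $g \in G$. First I would check the $(g\rho g^{-1})$-equivariance: since $\xi^\pm$ is $\rho$-equivariant, for $v \in \Gamma$ and $t \in \partial_\infty \Gamma$ we have $\xi^{\pm}(vt) = \rho(v)\xi^{\pm}(t)$, so $\xi'^{\pm}(vt) = g\xi^{\pm}(vt) = g\rho(v)\xi^{\pm}(t) = g\rho(v)g^{-1}\cdot g\xi^{\pm}(t) = (g\rho g^{-1})(v)\,\xi'^{\pm}(t)$. Here I use that $\Gamma$ acts on $\partial_\infty\Gamma$ and that left translation by $g$ on $G/P^\pm$ intertwines the $G$-actions. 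Continuity of $\xi'^\pm$ is immediate because $\xi'^\pm = \ell_g \circ \xi^\pm$ where $\ell_g \colon G/P^\pm \to G/P^\pm$ is the (continuous) left-translation homeomorphism.

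Next I would establish the two compatibility conditions, which both follow from the fact that $g$ acts on the flag varieties as an automorphism commuting with the relevant structure. For condition (1): since $(\xi^+,\xi^-)$ is compatible (being the Anosov maps of a $(P^+,P^-)$-Anosov representation, cf. the discussion after Proposition~\ref{1.4}), for each $t \in \partial_\infty\Gamma$ the intersection $\st(\xi^+(t)) \cap \st(\xi^-(t))$ is a parabolic subgroup $P$ of $G$. Now $\st(g\xi^\pm(t)) = g\,\st(\xi^\pm(t))\,g^{-1}$, hence $\st(\xi'^+(t)) \cap \st(\xi'^-(t)) = g\big(\st(\xi^+(t)) \cap \st(\xi^-(t))\big)g^{-1} = gPg^{-1}$, which is again parabolic since conjugation by $g$ is an algebraic group automorphism of $G$. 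For condition (2): recall $\chi = G/L \subset \mathcal F^+ \times \mathcal F^-$ is the unique open $G$-orbit, hence is $G$-invariant under the diagonal action; so for $t^+ \neq t^-$, since $(\xi^+(t^+),\xi^-(t^-)) \in \chi$, applying $g$ diagonally gives $(g\xi^+(t^+), g\xi^-(t^-)) = (\xi'^+(t^+), \xi'^-(t^-)) \in \chi$, i.e.\ the pair is transverse.

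I do not anticipate a serious obstacle here; the statement is essentially the observation that the compatibility conditions are $G$-equivariant in nature, so they are preserved under the simultaneous action of $g$ on the flag varieties and by conjugation on the representation. The only point requiring a little care is making sure that the $\Gamma$-action on $\partial_\infty\Gamma$ used in the equivariance relation is unchanged when we replace $\rho$ by $g\rho g^{-1}$ — but this is clear, as $\partial_\infty\Gamma$ depends only on $\Gamma$ as an abstract group and not on the representation. One should also note that one does not even need to invoke the Anosov dynamical condition (2) of the definition for this lemma; compatibility of $(\xi'^+,\xi'^-)$ is a purely algebraic/topological consequence of compatibility of $(\xi^+,\xi^-)$, which is why the lemma is stated at the level of compatible pairs. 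This is precisely the input needed to show that the conjugation action on Anosov representations is well-behaved, feeding into Proposition~\ref{1.5} and Proposition~\ref{1.6}.
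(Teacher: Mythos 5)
Your proposal is correct and follows essentially the same route as the paper's proof: verifying $(g\rho g^{-1})$-equivariance by inserting $g^{-1}g$, continuity via the continuity of the $G$-action on $\mathcal F^\pm$, the parabolicity of $\st(\xi'^+(t))\cap\st(\xi'^-(t))=gQg^{-1}$ via conjugation of stabilizers, and transversality via the $G$-invariance of the open orbit $\chi$. No gaps; the additional remarks about the $\Gamma$-action on $\partial_\infty\Gamma$ being independent of $\rho$ are a sensible clarification but not a departure from the paper's argument.
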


 \begin{proof}
We have $\xi^{\pm}: \partial_{\infty} \Gamma \to \mathcal F^{\pm}$ is the  compatible pair of  $\rho$-equivariant continuous maps by the definition of Anosov representations. This implies the following.

\subsubsection*{ $\xi'^{\pm}$ are $g \rho g^{-1}$-equivariant}   Note that for $v \in \Gamma$, $t \in \partial_{\infty} \Gamma$, 
$$\xi'^{\pm}(vt)=g\xi^{\pm}(vt)=g \rho(v) \xi^{\pm }(t)=g \rho(v) g^{-1} g \xi^{\pm}(t)=g \rho(v)g^{-1} \xi'^{\pm}(t). $$

\subsubsection*{ The maps ${\xi'}^{\pm}$ are continuous} Since $\xi^{\pm}: \partial_{\infty} \Gamma \to \mathcal F^{\pm}$ are continuous, $\xi'^{\pm}=g \xi^{\pm}$ are also continuous, since the action of $G$ on $\mathcal F^{\pm}$ are continuous. 

\subsubsection*{Compatibility of $(\xi'^+, \xi'^-)$} 
Suppose $\st(\xi^+(t)) \cap \st(\xi^-(t))=Q$, where $Q$ is a parabolic subgroup of $G$. It is easy to see that $\st(g \xi^{\pm}(t))=g \st(\xi^{\pm}(t))g^{-1}$.  Thus,\begin{eqnarray*}
\st(\xi'^+(t)) \cap \st(\xi'^-(t))&=& g \st(\xi^+(t)) g^{-1} \cap g \st(\xi^-(t)) g^{-1}\\ & =&  g (\st(\xi^+(t) \cap \st(\xi^-(t)) g^{-1}=g Q g^{-1}. \end{eqnarray*}
which is also a parabolic subgroup of $G$.

\subsubsection*{Transversility of $(\xi'^+(t^+), \xi'^-(t^-)$ }
    Note that  
    $(\xi'^+(t^+), \xi'^-(t^-))=(g\xi^+(t^+)\,,\, g\xi^-(t^-))$. \hbox{Using} transversility of $(\xi^+, \xi^-)$, we have $(\xi^+(t^+), \xi^-(t^-)) \in \chi \subset \mathcal F^+ \times \mathcal F^-$. Since $\chi$ is a homogeneous space, in particular , an unique open $G$ orbit in  
    $\mathcal F^+ \times \mathcal F^-$ under diagonal action of $G$, hence $g(\xi^+(t^+), \xi^-(t^-))$$=(g\xi^+(t^+)\,,\, g\xi^-(t^-))$ belongs to $\chi$. Thus $(\xi'^+(t^+), \xi'^-(t^-)) \in \chi$ for all $t^+, ~t^- \in \partial_{\infty}\Gamma$, $ t^+ \neq t^-$. 

This proves the lemma. 
\end{proof}

Now using \propref{1.3}, \lemref{ml} and the fact that conjugate of an irreducible subgroup is also irreducible, we obtain the following proposition.

\begin{proposition}\label{1.5}
   Let $\rho: \Gamma \to \SL(n, \C)$ be an irreducible  $(Q^+, Q^-)$-Anosov representation with associated Anosov maps $(\xi^+, \xi^-)$. Then $ g\rho {g^{-1}}: \Gamma \to \SL(n, \C)$ be an irreducible  $(Q^+, Q^-)$-Anosov representation with associated Anosov maps $(g\xi^+, g\xi^-)$ $\forall g \in \SL(n, \C)$.
\end{proposition}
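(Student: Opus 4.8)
The plan is to assemble the statement from the three ingredients already available: the transfer lemma \lemref{ml}, the Guichard--Wienhard criterion \propref{1.3}, and the elementary fact that conjugation preserves irreducibility. First I would observe that if $\rho\colon\Gamma\to\SL(n,\C)$ is irreducible, then so is $g\rho g^{-1}$ for every $g\in\SL(n,\C)$: a parabolic subgroup $P\supset g\rho(\Gamma)g^{-1}$ would give the proper parabolic $g^{-1}Pg\supset\rho(\Gamma)$, contradicting irreducibility of $\rho$ (here one uses that $g^{-1}Pg$ is again parabolic, being a conjugate of a parabolic). Thus $g\rho g^{-1}$ satisfies hypothesis $(1)$ of \propref{1.3}.

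Next I would apply \lemref{ml} with the given $g$: since $\rho$ is $(Q^+,Q^-)$-Anosov with Anosov maps $(\xi^+,\xi^-)$, the lemma produces maps $\xi'^{\pm}(t)=g\xi^{\pm}(t)$ which form a compatible pair of $g\rho g^{-1}$-equivariant continuous maps $\partial_\infty\Gamma\to \SL(n,\C)/Q^{\pm}$. This is precisely hypothesis $(2)$ of \propref{1.3} for the representation $g\rho g^{-1}$. Therefore \propref{1.3} applies and yields that $g\rho g^{-1}$ is $(Q^+,Q^-)$-Anosov with Anosov maps exactly $\xi'^{\pm}=g\xi^{\pm}$, which is the asserted conclusion.

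There is essentially no serious obstacle here; the content is entirely in \lemref{ml} and \propref{1.3}, and the present proposition is a packaging of those two. The one point that deserves an explicit (if trivial) remark is why $g\rho g^{-1}$ is still irreducible, so that \propref{1.3} is genuinely applicable — without irreducibility one would have to appeal instead to \propref{1.4} and Zariski density, which is not the hypothesis here. I would state the irreducibility observation as a one-line lemma or simply inline it, since it is immediate from the stability of the class of parabolic subgroups under conjugation, and then conclude by invoking \lemref{ml} and \propref{1.3} in sequence.
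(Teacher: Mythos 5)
Your proposal is correct and follows exactly the paper's route: the paper derives Proposition~\ref{1.5} by combining Lemma~\ref{ml} (which supplies the compatible pair $g\xi^{\pm}$ of $g\rho g^{-1}$-equivariant continuous maps) with Proposition~\ref{1.3} and the observation that a conjugate of an irreducible subgroup is irreducible. Your extra remark spelling out why conjugation preserves irreducibility (via conjugation-stability of parabolic subgroups) is a welcome, if minor, elaboration of what the paper leaves implicit.
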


Similarly using \propref{1.4}, \lemref{ml} and the fact that conjugate of a Zariski dense subgroup is also Zariski dense, the following proposition can be obtained.
\begin{proposition}\label{1.6}
   Let $\rho: \Gamma \to G$ be a Zariski dense  $(P^+, P^-)$-Anosov representation with associated Anosov maps $(\xi^+, \xi^-)$. Then $ g\rho {g^{-1}}: \Gamma \to G$ be a Zariski dense  $(P^+, P^-)$-Anosov representation with associated Anosov maps $(g\xi^+, g\xi^-)$ $\forall g \in G$. 
\end{proposition}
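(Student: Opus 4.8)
The plan is to mirror the structure of \propref{1.5}, since \propref{1.6} differs only in replacing \propref{1.3} by \propref{1.4} and replacing "irreducible" by "Zariski dense" throughout. The key inputs are already in place: \lemref{ml} guarantees that if $\rho$ is $(P^+,P^-)$-Anosov with Anosov maps $(\xi^+,\xi^-)$, then for any $g\in G$ the translated maps $({\xi'}^+,{\xi'}^-)=(g\xi^+,g\xi^-)$ form a compatible pair of $g\rho g^{-1}$-equivariant continuous maps into $\mathcal F^+$, resp. $\mathcal F^-$; and \propref{1.4} says that for a Zariski dense representation of a finitely generated word hyperbolic group $\Gamma$ into an affine complex connected semisimple algebraic group $G$, the mere existence of such a compatible equivariant continuous pair forces the $(P^+,P^-)$-Anosov property, with those maps being the Anosov maps.

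First I would observe that $g\rho g^{-1}$ is again Zariski dense: conjugation by a fixed $g\in G$ is an automorphism of the algebraic group $G$, hence maps the Zariski closure of $\rho(\Gamma)$ onto the Zariski closure of $(g\rho g^{-1})(\Gamma)$; so if $\overline{\rho(\Gamma)}=G$ then $\overline{(g\rho g^{-1})(\Gamma)}=G$ as well. Next I would invoke \lemref{ml} with this same $g$ to produce the compatible pair $({\xi'}^+,{\xi'}^-)=(g\xi^+,g\xi^-)$ of $g\rho g^{-1}$-equivariant continuous maps $\partial_\infty\Gamma\to\mathcal F^\pm$. Then I would apply \propref{1.4} to the Zariski dense representation $g\rho g^{-1}$ together with this pair, concluding that $g\rho g^{-1}$ is $(P^+,P^-)$-Anosov with Anosov maps $(g\xi^+,g\xi^-)$. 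Since $g\in G$ was arbitrary, this holds for all $g\in G$, which is exactly the assertion.

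There is essentially no obstacle here: all the heavy lifting — the continuity, equivariance, compatibility, and transversality of $(g\xi^+,g\xi^-)$ — is already carried out in the proof of \lemref{ml}, and the converse direction (compatible equivariant maps $\Rightarrow$ Anosov) is quoted from \cite[Theorem 4.11]{gw} as \propref{1.4}. The only point worth stating explicitly is the stability of Zariski density under conjugation, which is immediate. Thus the proof is a short three-line chain: (Zariski density of $g\rho g^{-1}$) $+$ (\lemref{ml}) $+$ (\propref{1.4}).

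\begin{proof}
Fix $g \in G$. Since conjugation by $g$ is an automorphism of the algebraic group $G$, it sends the Zariski closure of $\rho(\Gamma)$ onto the Zariski closure of $(g\rho g^{-1})(\Gamma)$; as $\rho$ is Zariski dense, so is $g\rho g^{-1}$. By \lemref{ml}, the maps ${\xi'}^{\pm}(t) = g\xi^{\pm}(t)$ form a compatible pair of $g\rho g^{-1}$-equivariant continuous maps $\partial_{\infty}\Gamma \to \mathcal F^{\pm} = G/P^{\pm}$. Applying \propref{1.4} to the Zariski dense representation $g\rho g^{-1}$ and this compatible pair, we conclude that $g\rho g^{-1}$ is $(P^+,P^-)$-Anosov with Anosov maps $(g\xi^+, g\xi^-)$. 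As $g \in G$ was arbitrary, the claim follows for all $g \in G$.
\end{proof}
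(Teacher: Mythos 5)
Your proof is correct and follows exactly the route the paper takes: the paper obtains \propref{1.6} by combining \propref{1.4}, \lemref{ml}, and the observation that a conjugate of a Zariski dense subgroup is Zariski dense. Your write-up merely makes that one-line argument explicit.
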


\subsection{Irreducible and Zariski dense representations}    
From \propref{1.5},  it follows that $\SL(n,\C)$ acts on ${Hom_{(Q^+,Q^-)}^i (\Gamma,\SL(n,\C))}$  by conjugation. Similarly, it follows from \propref{1.6}   
 that $G$ acts on the set of Zariski dense $(P^+,P^-)$-Anosov representations (denoted by $Hom_{(P^+,P^-)}^{zd} (\Gamma, G)$) by conjugation.  

This action gives a quotient space structure on $Hom_{(Q^+,Q^-)}^i (\Gamma, G)$ for $G=\SL(n,\C)$ which is  denoted by ${Hom_{(Q^+,Q^-)}^i (\Gamma, G)} / G$  where any two $\rho, \rho' \in Hom_{(Q^+,Q^-)}^i (\Gamma, G)$ are equivalent  if there exists $g \in G$ such that $\rho'=g\rho g^{-1}$. Similarly we get a quotient space structure on $Hom_{(P^+,P^-)}^{zd}(\Gamma, G)$ for any affine complex connected semisimple  algebraic group $G.$

\begin{proposition}\label{1.14}
The Zariski dense representations are irreducible, that is, 
$Hom^{zd} (\Gamma, G)\subset Hom^i(\Gamma, G)$. 
\end{proposition}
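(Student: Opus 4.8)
The plan is to prove the contrapositive: if $\rho\colon\Gamma\to G$ is not irreducible, then $\rho$ is not Zariski dense. So assume $\rho(\Gamma)$, as a subgroup of $G$, is not irreducible. By the definition of irreducibility recalled in the introduction (following \cite{si}), this means that $\rho(\Gamma)$ is contained in some \emph{proper} parabolic subgroup $P\subsetneq G$.

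The key observation is simply that a parabolic subgroup is, by definition, a \emph{closed} subgroup of $G$ (a Zariski-closed subgroup containing a Borel subgroup). Consequently, passing to Zariski closures inside $G$ yields
$$\overline{\rho(\Gamma)}\ \subseteq\ \overline{P}\ =\ P\ \subsetneq\ G,$$
so $\rho(\Gamma)$ is not dense in $G$ for the Zariski topology; that is, $\rho\notin Hom^{zd}(\Gamma,G)$. Taking complements inside $Hom(\Gamma,G)$ gives $Hom^{zd}(\Gamma,G)\subseteq Hom^i(\Gamma,G)$, as claimed.

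There is essentially no obstacle here: the statement is immediate from the two definitions involved, once one notes that a proper parabolic subgroup is a proper Zariski-closed subgroup and hence coincides with its own closure. (For $G=\SL(n,\C)$ one may see it even more concretely: proper parabolic subgroups are precisely the stabilizers of nontrivial partial flags in $\C^n$, so a non-irreducible $\rho$ fixes some proper nonzero subspace $W\subset\C^n$, and $\st_G(W)$ is then a proper closed subgroup containing $\rho(\Gamma)$.) The one point worth double-checking is that the notion of ``irreducible'' in play for general semisimple $G$ is indeed ``not contained in any proper parabolic subgroup'', which is exactly the convention fixed in the introduction; with that convention the inclusion is a one-line argument.
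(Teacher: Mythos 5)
Your proof is correct and is essentially the same as the paper's: both arguments hinge on the single observation that parabolic subgroups are Zariski closed, so a subgroup contained in a proper parabolic cannot be Zariski dense. The paper states it directly (dense implies not contained in a proper parabolic, hence irreducible) while you phrase it contrapositively, but the content is identical.
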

\begin{proof}
    Suppose $\rho: \Gamma \to G$ is a Zariski dense represesentation. So, the Zariski closure of $\rho (\Gamma)$ is $G$. By definition, the parabolic subgroups of  $G$ are Zariski closed. Therefore, $\rho (\Gamma)$ can not be contained in any proper parabolic subgroup of $G$. Note that a subgroup is called irreducible if it is not contained in any proper parabolic subgroup. This implies $\rho (\Gamma)$ is an irreducible subgroup of $G$.
\end{proof}

\begin{corollary}\label{1.15}
   ${{Hom^{zd}}_{(P^+,P^-)}(\Gamma, G)}$ is open in $Hom (\Gamma,G)$, as well as in $Hom^i (\Gamma,G)$ in the  complex topology. 
\end{corollary}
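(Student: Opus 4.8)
The plan is to mimic verbatim the argument already used for \corref{1.13}: realize $Hom^{zd}_{(P^+,P^-)}(\Gamma, G)$ as a finite intersection of sets that are known to be open in the complex topology, and then exploit the inclusion of \propref{1.14} to transfer openness to the subspace $Hom^i(\Gamma,G)$.

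First I would record the set-theoretic identity
$$Hom^{zd}_{(P^+,P^-)}(\Gamma, G) = Hom_{(P^+,P^-)}(\Gamma, G) \cap Hom^{zd}(\Gamma, G),$$
which is immediate from the definitions, since a representation is a Zariski dense $(P^+,P^-)$-Anosov representation precisely when it is simultaneously $(P^+,P^-)$-Anosov and Zariski dense. By \thmref{1.7} together with \remref{1.8}, the first factor $Hom_{(P^+,P^-)}(\Gamma, G)$ is open in $Hom(\Gamma, G)$ in the complex topology. By \propref{1.11} the second factor $Hom^{zd}(\Gamma, G)$ is Zariski open in $Hom(\Gamma, G)$, hence, since the complex topology refines the Zariski one (\remref{1.12}), it is also open in the complex topology. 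A finite intersection of complex-open subsets is complex-open, so $Hom^{zd}_{(P^+,P^-)}(\Gamma, G)$ is open in $Hom(\Gamma, G)$ in the complex topology.

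For the second assertion I would invoke \propref{1.14}, which gives $Hom^{zd}(\Gamma, G)\subset Hom^i(\Gamma, G)$ and hence $Hom^{zd}_{(P^+,P^-)}(\Gamma, G)\subset Hom^i(\Gamma, G)$. A subset of $Hom(\Gamma,G)$ that is open there and is contained in $Hom^i(\Gamma, G)$ is automatically open in the subspace topology on $Hom^i(\Gamma, G)$, because it equals its own intersection with $Hom^i(\Gamma,G)$. This completes the proof.

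I do not expect a genuine obstacle here; the only point requiring care is the bookkeeping of topologies: \thmref{1.7} is being used in the complex (pointwise-convergence) topology via \remref{1.8}, whereas \propref{1.11} is a priori a Zariski statement, so one must first pass through \remref{1.12} before intersecting. Everything else is formal point-set topology, exactly parallel to the proof of \corref{1.13}.
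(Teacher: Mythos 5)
Your argument is correct and coincides with the paper's own proof: both write $Hom^{zd}_{(P^+,P^-)}(\Gamma,G)$ as the intersection of $Hom_{(P^+,P^-)}(\Gamma,G)$ and $Hom^{zd}(\Gamma,G)$, invoke \thmref{1.7} and \remref{1.12} for openness in the complex topology, and then use \propref{1.14} to deduce openness in $Hom^i(\Gamma,G)$. Your extra bookkeeping about which topology each cited result lives in is a welcome clarification but not a departure from the paper's route.
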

\begin{proof}
    Note that, $Hom_{(P^+,P^-)}^{zd} (\Gamma,G)$ is the intersection of two open sets   $Hom_{(P^+,P^-)} (\Gamma,G)$ and $Hom^{zd} (\Gamma,G)$   by \thmref{1.7} and \remref{1.12}. Therefore it is open in $Hom(\Gamma,G)$. Now from \propref{1.14}, it follows that $Hom_{(P^+,P^-)}^{zd} (\Gamma,G)$  is also open in $Hom^i (\Gamma,G)$. \end{proof}

\subsection{Character Variety}
We know that $G$ acts on $Hom^i(\Gamma,G)$ via conjugation. Denote $Hom^i(\Gamma,G)/G=X^i_G(\Gamma)$,   and \; ${{Hom_{(P^+,P^-)}^{zd}(\Gamma, G)}/G}={X^{zd,A}_G(\Gamma)}$. When $G=\SL(n, \C)$, we denote $Hom_{(Q^+,Q^-)}^i (\Gamma, \SL(n,\C))/\SL(n,\C)=X^{i,A}_{\SL(n,\C)}(\Gamma)$. 

\medskip Now, observe that all of these quotient spaces consist of some orbits of irreducible (which implies completely reducible) representations of $Hom(\Gamma,G)$. Since each element of the categorical quotient is represented by a unique conjugation orbit of completely reducible representations \cite[Section\,11]{si}, in our cases the categorical quotients are same as the quotient spaces which are obtained by conjugation $G$-actions. Therefore, we will call these quotient spaces as character varieties of respective types of representations of $\Gamma$ in $G$. Note that, these quotient spaces which we are calling as character varieties may not be algebraic varieties. In particular, these are subsets of ${\mathcal X}=Hom(\Gamma,G)//G.$

On the other hand, it follows from \propref{1.10}  that $Hom^i(\Gamma,G)$ is an algebraic variety. From \propref{1.16}, it can be said that $Hom^i(\Gamma,G)$ is a non-singular algebraic variety. Therefore, $Hom^i(\Gamma,G)$ forms a complex manifold in the complex topology \cite[Chapter \,7, \,8]{shf}. Since   $Hom_{(P^+,P^-)}^{zd}(\Gamma, G)$,  is open in   $Hom^i(\Gamma,G)$,  in the complex topology,  by    \corref{1.15}, $Hom_{(P^+,P^-)}^{zd}(\Gamma, G)$\,) is also a complex manifold  (\,open submanifold of $Hom^i(\Gamma,G)$\,). 

By the same argument, using \corref{1.13}, it can be said that  $ Hom_{(Q^+,Q^-)}^i (\Gamma, \SL(n,\C))$ is an open complex submanifold of  $Hom^i(\Gamma,\SL(n,\C))$.

 It is said that a reductive group $G$ has property CI if the centralizer of every irreducible subgroup coincides with center of $G$. Note that,   $G=\SL(n,\C)$ has property CI \cite[Example 18]{si}. From here, the following follows. 
 
 \begin{corollary}\label{cor}
Let $G=\SL(n,\C)$. Then      $G/{C(G)}$ acts freely on $Hom^i(\Gamma,G)$, $Hom_{(Q^+,Q^-)}^i (\Gamma, G)$    and $Hom_{(P^+,P^-)}^{zd} (\Gamma, G)$  respectively.  
\end{corollary}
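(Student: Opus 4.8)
The plan is to deduce \corref{cor} directly from the property CI of $\SL(n,\C)$ together with the fact, already established in \secref{3}, that all three sets $Hom^i(\Gamma,G)$, $Hom_{(Q^+,Q^-)}^i(\Gamma,G)$ and $Hom_{(P^+,P^-)}^{zd}(\Gamma,G)$ consist of irreducible representations. First I would observe that it suffices to prove the statement for $Hom^i(\Gamma,G)$, since the other two sets are subsets of it (the Anosov irreducible set by definition, and the Anosov Zariski dense set by \propref{1.14}), and the restriction of a free action to an invariant subset is again free. So the whole proof reduces to: \emph{$G/C(G)$ acts freely on $Hom^i(\Gamma,G)$.}

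For that, fix an irreducible representation $\rho\colon\Gamma\to G$ and suppose $g\in G$ satisfies $g\rho g^{-1}=\rho$, i.e. $g\rho(v)g^{-1}=\rho(v)$ for every $v\in\Gamma$. This says precisely that $g$ lies in the centralizer of the subgroup $\rho(\Gamma)\subset G$. Since $\rho$ is irreducible, $\rho(\Gamma)$ is an irreducible subgroup, and property CI of $G=\SL(n,\C)$ (cited from \cite[Example 18]{si}) gives that the centralizer of $\rho(\Gamma)$ equals $C(G)$. Hence $g\in C(G)$, so the image of $g$ in $G/C(G)$ is trivial. This shows the stabilizer in $G/C(G)$ of every point of $Hom^i(\Gamma,G)$ is trivial, which is exactly freeness of the conjugation action of $G/C(G)$. (One should also note in passing that the conjugation action of $G$ on $Hom^i(\Gamma,G)$ factors through $G/C(G)$, since $C(G)$ acts trivially by conjugation; this is what makes the quotient action well-defined in the first place.)

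I do not anticipate a genuine obstacle here: the statement is essentially an unwinding of the definition of property CI applied to the conjugation action, plus the elementary remark that restricting a free action to an invariant subset keeps it free. The only points requiring minor care are (i) making sure the three sets in question are indeed invariant under the $G$-action — for $Hom^i$ this is standard, for $Hom_{(Q^+,Q^-)}^i$ it is \propref{1.5}, and for $Hom_{(P^+,P^-)}^{zd}$ it is \propref{1.6} — and (ii) checking the containments $Hom_{(Q^+,Q^-)}^i(\Gamma,G)\subset Hom^i(\Gamma,G)$ and $Hom_{(P^+,P^-)}^{zd}(\Gamma,G)\subset Hom^i(\Gamma,G)$, the latter via \propref{1.14}. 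With those in hand the proof is a short paragraph.
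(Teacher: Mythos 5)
Your proof is correct and follows exactly the reasoning the paper leaves implicit: the stabilizer of an irreducible $\rho$ under conjugation is the centralizer of $\rho(\Gamma)$, which equals $C(G)$ by property CI, and the two Anosov sets inherit freeness as $G$-invariant subsets of $Hom^i(\Gamma,G)$ (using \propref{1.14} for the Zariski dense case). This matches the paper's approach; no issues.
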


Also note that $$Hom^i(\Gamma,G)/{(G/C(G))}=Hom^i(\Gamma,G)/G=X^i_G(\Gamma),$$  
$$Hom_{(P^+,P^-)}^{zd} (\Gamma, G)/{(G/C(G))}= Hom_{(P^+,P^-)}^{zd} (\Gamma, G)/G=X^{zd,A}_G(\Gamma).$$  This is also true for $Hom_{(Q^+,Q^-)}^i(\Gamma, \SL(n,\C))$.

The following proposition follows using similar ideas as in the proof of \cite[Proposition\,49]{si}. 
\begin{proposition}\label{1.17}
Let $\Gamma=F_k$ or $\pi_1(\Sigma_g)$, $g \geq 2$, and  $G=\SL(n,\C)$. Then
    $X^i_G(\Gamma)$ and  $X^{i,A}_G(\Gamma)$ are complex manifolds. 
\end{proposition}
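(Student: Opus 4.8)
The plan is to follow the strategy of \cite[Proposition 49]{si} and reduce the statement to the action of $PG = G/C(G)$ on a non-singular variety being free and proper. First I would record what is already available: by \propref{1.16} the variety $Hom^i(\Gamma,G)$ is non-singular, hence a complex manifold in the complex (analytic) topology; by \corref{1.13} the subset $Hom_{(Q^+,Q^-)}^i(\Gamma,G)$ is open in it, hence is itself a non-singular complex manifold (open submanifold); and by \corref{cor} the group $PG = G/C(G)$, which for $G=\SL(n,\C)$ is $\mathrm{PSL}(n,\C)$, acts freely on each of these. So the quotients $X^i_G(\Gamma) = Hom^i(\Gamma,G)/PG$ and $X^{i,A}_G(\Gamma) = Hom_{(Q^+,Q^-)}^i(\Gamma,G)/PG$ are orbit spaces of free actions of a connected reductive (in particular Lie) group on complex manifolds. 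To conclude that these orbit spaces are themselves complex manifolds, it suffices to show the action is also \emph{proper} (and that the orbits are closed, which follows from properness); then the quotient manifold theorem gives a smooth, indeed holomorphic, manifold structure on the quotient, of dimension $\dim Hom^i_G(\Gamma) - \dim PG$.

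The properness is where the real content lies, and here I would import exactly the ingredient Sikora uses: for $\Gamma = F_k$ or $\pi_1(\Sigma_g)$ and $G$ reductive, the conjugation action of $PG$ on $Hom^i(\Gamma,G)$ is proper — this is essentially \cite[Proposition 49]{si} and rests on the fact that the GIT quotient $Hom^i(\Gamma,G)//G$ is a geometric quotient on the irreducible (stable) locus, the closedness of orbits of completely reducible representations, and the CI property of $\SL(n,\C)$ ensuring stabilizers are exactly $C(G)$. Concretely, one checks that if $\rho_m \to \rho$ and $g_m \rho_m g_m^{-1} \to \rho'$ in $Hom^i$, then $\{g_m\}$ has a convergent subsequence in $PG$; this uses that an irreducible representation is a stable point, so its orbit is closed and the orbit map $PG \to PG\cdot\rho$ is a homeomorphism onto a closed submanifold. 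Once properness of the $PG$-action on $Hom^i(\Gamma,G)$ is in hand, properness of the restricted action on the \emph{open} $PG$-invariant subset $Hom_{(Q^+,Q^-)}^i(\Gamma,G)$ is automatic, since the restriction of a proper action to an invariant open set is proper. Then the holomorphic quotient manifold theorem (a holomorphic version of the standard Lie-group quotient theorem, valid because $PG$ is a complex Lie group acting holomorphically, freely and properly) yields that both $X^i_G(\Gamma)$ and $X^{i,A}_G(\Gamma)$ are complex manifolds.

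For the dimension count (which the proposition statement does not assert but which I would want to flag for the subsequent \thmref{2.1}), in the free-proper-quotient picture the complex dimension of $X^i_G(\Gamma)$ is $\dim_\C Hom^i(\Gamma,G) - \dim_\C PG$. For $\Gamma = F_k$ one has $Hom(F_k,G) \cong G^k$, so $\dim_\C Hom^i = k(n^2-1)$ and $\dim_\C PG = n^2-1$, giving $(k-1)(n^2-1)$; for $\pi_1(\Sigma_g)$ the standard computation at a smooth (scheme-smooth) irreducible point gives $\dim_\C Hom^i = (2g-1)(n^2-1) + 1$, so the quotient has dimension $(2g-2)(n^2-1)$. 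The same numbers apply to $X^{i,A}_G(\Gamma)$ since it is an open submanifold of $X^i_G(\Gamma)$.

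The main obstacle is establishing properness of the $PG$-conjugation action on $Hom^i(\Gamma,G)$ cleanly; everything else (non-singularity, openness, freeness, the quotient manifold theorem) is formal once properness is granted. I would handle this by citing and adapting \cite[Proposition 49]{si} rather than reproving it, emphasizing that $\SL(n,\C)$ has property CI so that the relevant stabilizers are central and the orbit maps of $PG$ are proper embeddings, which is precisely what makes the conjugation action proper on the stable (irreducible) locus. A secondary, smaller point to be careful about is that the quotient manifold theorem be applied in the holomorphic category — this is legitimate because $Hom^i(\Gamma,G)$ is a non-singular complex algebraic variety, $PG$ is a complex algebraic group acting algebraically, and a free proper algebraic action of a reductive group admits a geometric quotient in the category of complex manifolds (indeed of algebraic varieties).
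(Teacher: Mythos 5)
Your proposal is correct and follows essentially the same route as the paper: both rest on the freeness of the $G/C(G)$-action (via the CI property of $\SL(n,\C)$), the properness of the conjugation action on $Hom^i(\Gamma,G)$ (the paper cites \cite[Proposition 1.1]{jm}, which is the same ingredient underlying \cite[Proposition 49]{si}), its automatic restriction to the open invariant subset $Hom_{(Q^+,Q^-)}^i(\Gamma,G)$, and the quotient manifold theorem. Your version is if anything slightly more careful in phrasing the key hypothesis as properness of the Lie group action rather than proper discontinuity.
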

\begin{proof}
 $G/{C(G)}$ action on $Hom^i(\Gamma,G)$ is properly discontinuous by \cite[Proposition\,1.1]{jm}. Since $Hom_{(Q^+,Q^-)}^i (\Gamma, G)$ is open in $Hom^i (\Gamma, G)$, $G/{C(G)}$ action on $Hom_{(Q^+,Q^-)}^i (\Gamma, G)$ is also properly discontinuous. From \corref{cor},  $G/{C(G)}$ action is  free. The quotient of a complex manifold by a free and properly discontinuous group action is also a complex manifold. Therefore $X^i_G(\Gamma)$ and  $X^{i,A}_G(\Gamma)$ are complex manifolds.
    \end{proof}
 
\medskip Since $Hom_{(Q^+,Q^-)}^i (\Gamma, G)$ is open in $Hom^i (\Gamma, G)$ (here  $G=\SL(n,\C)$), their quotient spaces by conjugation  $G$-action follow the same relation, that is,   $X^{i,A}_G(\Gamma)$ is an open submanifold of $X^i_G(\Gamma)$. Therefore $\dim Hom_{(Q^+,Q^-)}^i (\Gamma, G)=\dim Hom^i (\Gamma, G)$ and $\dim X_G^{i,A} (\Gamma)=\dim X^i_G (\Gamma)$ as complex manifolds.

\begin{proposition}\label{1.18}
    For $\Gamma=F_k$, \hbox{resp.} $\pi_1(\Sigma_g)$ where $g \geq 2$, $\dim Hom^i (\Gamma, G)= k \dim  G$, resp. $(2g-1) \dim G$,  where $G$ is any affine complex connected semisimple algebraic group.
\end{proposition}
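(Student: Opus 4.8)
The plan is to treat the free group and the surface group separately, in both cases using that by \propref{1.10} and \propref{1.16} the set $Hom^i(\Gamma,G)$ is a nonsingular variety which is Zariski open in $Hom(\Gamma,G)$, so that $\dim Hom^i(\Gamma,G)$ can be computed as the local dimension of $Hom(\Gamma,G)$ at any irreducible $\rho$; such $\rho$ exist since $F_k$ ($k\ge 2$) and $\pi_1(\Sigma_g)$ ($g\ge 2$, note $\pi_1(\Sigma_g)$ surjects onto $F_g$) both surject onto $F_2$, and $F_2$ admits Zariski dense — hence, by \propref{1.14}, irreducible — representations into any semisimple $G$. For $\Gamma=F_k$ the statement is then immediate: $Hom(F_k,G)\cong G^k$ is a connected, hence irreducible, nonsingular affine variety of dimension $k\dim G$, so its nonempty Zariski open subset $Hom^i(F_k,G)$ has the same dimension $k\dim G$.

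For $\Gamma=\pi_1(\Sigma_g)$ I would fix an irreducible $\rho$, which by \propref{1.16} is scheme smooth, so the local dimension of $Hom(\Gamma,G)$ at $\rho$ equals $\dim Z^1(\Gamma,\mathfrak g_{\mathrm{Ad}\rho})$, the space of $1$-cocycles valued in the Lie algebra $\mathfrak g$ of $G$ twisted by $\mathrm{Ad}\circ\rho$ (the classical Weil identification of the tangent space to the representation scheme). I would then read off $\dim Z^1=\dim B^1+\dim H^1$ from the exact sequence $0\to B^1\to Z^1\to H^1\to 0$. Here $H^0(\Gamma,\mathfrak g_{\mathrm{Ad}\rho})=\mathfrak g^{\rho(\Gamma)}$ is the Lie algebra of the centralizer $Z_G(\rho(\Gamma))$, which is finite since an irreducible subgroup — one contained in no proper parabolic — of a semisimple group has finite centralizer; hence $H^0=0$ and $\dim B^1=\dim\mathfrak g-\dim H^0=\dim G$. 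Since $\Sigma_g$ is a closed oriented aspherical surface, $\Gamma$ is an orientable Poincar\'e duality group of dimension $2$, so $H^2(\Gamma,\mathfrak g_{\mathrm{Ad}\rho})\cong\big((\mathfrak g_{\mathrm{Ad}\rho})^{*}\big)^{\Gamma}$, and the Killing form gives a $\Gamma$-equivariant isomorphism $\mathfrak g_{\mathrm{Ad}\rho}\cong(\mathfrak g_{\mathrm{Ad}\rho})^{*}$, whence $H^2\cong H^0=0$. Finally the Euler characteristic identity $\dim H^0-\dim H^1+\dim H^2=(2-2g)\dim G$ forces $\dim H^1=(2g-2)\dim G$, so $\dim Z^1=(2g-2)\dim G+\dim G=(2g-1)\dim G$. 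As this is the local dimension at every point of the open nonsingular set $Hom^i(\Gamma,G)$, each of its components has this dimension and $\dim Hom^i(\pi_1(\Sigma_g),G)=(2g-1)\dim G$.

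I expect the main obstacle to be precisely the cohomological bookkeeping for the surface group: justifying cleanly that irreducibility of $\rho$ forces $H^0=0$ (equivalently, that an irreducible subgroup of a semisimple group has finite centralizer) and correctly combining Poincar\'e duality with the self-duality of the adjoint module to get $H^2=0$ — this is the only place where the hypothesis ``irreducible'' enters the count, and dropping it changes the answer. An alternative route I would keep in reserve is to realize $Hom(\pi_1(\Sigma_g),G)$ as $\mu^{-1}(1)$ for the product-of-commutators map $\mu\colon G^{2g}\to G$ and invoke Goldman's theorem that $\mu$ is a submersion at irreducible representations, which gives $\dim_\rho Hom=2g\dim G-\dim G=(2g-1)\dim G$ directly; everything else (openness, scheme smoothness, the $F_k$ case) is routine given the results already recalled.
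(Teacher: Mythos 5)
Your proof is correct, but for the surface group it takes a genuinely different route from the paper. The paper handles both cases with one observation --- $Hom^i(\Gamma,G)$ is Zariski open in $Hom(\Gamma,G)$, hence of the same dimension --- and then simply quotes $\dim Hom(F_k,G)=\dim G^k=k\dim G$ together with \cite[Lemma 1]{go83}, which asserts that every connected component of $Hom(\pi_1(\Sigma_g),G)$ has dimension $(2g-1)\dim G$. Your free-group case is identical, but for $\pi_1(\Sigma_g)$ you essentially reprove the content of Goldman's lemma on the irreducible locus: scheme smoothness at irreducible $\rho$ (\propref{1.16}) identifies the local dimension with $\dim Z^1(\Gamma,\mathfrak g_{Ad\circ\rho})$, and you compute $\dim Z^1=\dim B^1+\dim H^1=(\dim\mathfrak g-\dim H^0)+(2g-2)\dim\mathfrak g=(2g-1)\dim G$ using finiteness of the centralizer of an irreducible subgroup (so $H^0=0$; this is \cite[Prop.~15]{si}, already invoked elsewhere in the paper), Poincar\'e duality plus self-duality of $\mathfrak g$ under the Killing form (so $H^2=0$), and the Euler characteristic identity. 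Each step checks out. What your route buys is self-containedness and two points of care the paper's one-line citation glosses over: you verify that $Hom^i(\Gamma,G)$ is nonempty (via a surjection onto $F_2$ and a Zariski dense, hence irreducible, representation of $F_2$), without which ``open implies equidimensional'' is vacuous, and your pointwise computation of the local dimension at \emph{every} irreducible point shows all components of $Hom^i$ have the stated dimension, rather than relying on the open set meeting a top-dimensional component of $Hom$. What the paper's route buys is brevity and a statement about all of $Hom(\pi_1(\Sigma_g),G)$, not just its irreducible part. The only mild caveats in your write-up are bookkeeping ones: the existence of Zariski dense representations of $F_2$ into an arbitrary connected semisimple $G$ deserves a citation (it follows, e.g., from Tits' construction of dense free subgroups), and your Poincar\'e duality statement $H^2(\Gamma,M)\cong(M^*)^\Gamma$ should strictly read $H^2(\Gamma,M)\cong M_\Gamma\cong\bigl((M^*)^\Gamma\bigr)^*$, which of course has the same dimension and does not affect the count.
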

\begin{proof}
     Since $Hom^i (\Gamma, G)$ is Zariski open in $Hom (\Gamma, G)$, therefore $$\dim Hom^i (\Gamma, G)=\dim Hom (\Gamma, G).$$ Now consider the case for $\Gamma=\pi_1(\Sigma_g)$. By \cite[Lemma 1]{go83}, any connected component of $Hom (\pi_1(\Sigma_g), G)$ is of dimension $(2g-1)\dim G$.  So, $$\dim Hom (\pi_1(\Sigma_g), G)=\dim Hom^i (\pi_1(\Sigma_g), G)=(2g-1)\dim G.$$ When $\Gamma=F_k$, $$\dim Hom (F_k, G)=\dim G^ k= k  \dim G =\dim Hom^i(F_k, G).$$ This completes the proof.
     \end{proof}
\begin{remark}
    In the above proof, we have considered the respective spaces as  algebraic varieties while mentioning their dimensions.  We know that, any non-singular algebraic variety  admits a complex manifold structure and its    dimension as algebraic variety coincides with the dimension as complex manifold. We have already seen that, $Hom^i (\Gamma, G)$ is a non-singular algebraic variety. Therefore, both of these dimensions are same for $Hom^i (\Gamma, G)$.
\end{remark}

\begin{proposition}\label{1.19}
When $G=\SL(n,\C),$
\begin{enumerate}
    \item For $\Gamma=F_k$ where $k\geq 2$, $\dim X^i_G (F_k)=(k-1) \dim G=\dim X_G^{i,A} (F_k)$ as complex manifold.

    \item For $\Gamma=\pi_1(\Sigma_g)$ where $g\geq 2$, $\dim X^i_G (\pi_1(\Sigma_g))=(2g-2)\dim\hspace{0.08cm}G=\dim X_G^{i,A} (\pi_1(\Sigma_g))$ as complex manifold.
    \end{enumerate}
\end{proposition}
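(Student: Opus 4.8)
The plan is to deduce Proposition~\ref{1.19} from the facts already assembled, namely Proposition~\ref{1.18} (the dimension of $Hom^i(\Gamma,G)$), Corollary~\ref{cor} (freeness of the $G/C(G)$-action on $Hom^i$, $Hom^i_{(Q^+,Q^-)}$), the properly discontinuous action quoted in the proof of Proposition~\ref{1.17}, and the observation immediately following Proposition~\ref{1.17} that $X^{i,A}_G(\Gamma)$ is an open submanifold of $X^i_G(\Gamma)$, so the two have equal dimension. Thus it suffices to compute $\dim X^i_G(\Gamma)$ for $G=\SL(n,\C)$.

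First I would record that $\dim G = \dim \SL(n,\C) = n^2-1$ and that $\dim C(G) = 0$, since $C(\SL(n,\C))$ is the finite group of $n$-th roots of unity times the identity; hence $\dim\big(G/C(G)\big) = n^2-1 = \dim G$. Next, the $G/C(G)$-action on $Hom^i(\Gamma,G)$ is free (Corollary~\ref{cor}) and properly discontinuous (invoked in Proposition~\ref{1.17} via \cite{jm}), so the quotient map $Hom^i(\Gamma,G) \to X^i_G(\Gamma)$ is a holomorphic covering-type submersion onto a complex manifold whose fibers are the $(G/C(G))$-orbits, each of complex dimension $\dim G$. Therefore
\begin{equation*}
\dim X^i_G(\Gamma) = \dim Hom^i(\Gamma,G) - \dim\big(G/C(G)\big) = \dim Hom^i(\Gamma,G) - \dim G.
\end{equation*}
Feeding in Proposition~\ref{1.18}: for $\Gamma = F_k$, $\dim Hom^i(F_k,G) = k\dim G$, so $\dim X^i_G(F_k) = (k-1)\dim G = (k-1)(n^2-1)$; for $\Gamma = \pi_1(\Sigma_g)$, $\dim Hom^i(\pi_1(\Sigma_g),G) = (2g-1)\dim G$, so $\dim X^i_G(\pi_1(\Sigma_g)) = (2g-2)\dim G = (2g-2)(n^2-1)$. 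Finally, since $Hom^i_{(Q^+,Q^-)}(\Gamma,G)$ is nonempty and open in $Hom^i(\Gamma,G)$, and $X^{i,A}_G(\Gamma)$ is open in $X^i_G(\Gamma)$, the same numbers give $\dim X^{i,A}_G(\Gamma)$, establishing both parts.

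The only genuine point requiring care — and the step I expect to be the main obstacle — is justifying the dimension-subtraction formula for the quotient, i.e. that a free, properly discontinuous holomorphic action of a positive-dimensional complex Lie group $G/C(G)$ on a complex manifold yields a quotient manifold of dimension equal to the difference. Strictly, "properly discontinuous" is the notion appropriate to discrete group actions; here $G/C(G)$ is a positive-dimensional group, so what is actually needed is that the action is free and \emph{proper} (in the topological sense), which guarantees the orbit space is a complex manifold and the projection a holomorphic submersion with $\dim G$-dimensional fibers. One should therefore either cite the relevant slice/quotient theorem for proper free holomorphic actions of Lie groups, or — following \cite{si} and \cite{jm} more literally — note that the GIT quotient $X^i_G(\Gamma)$ is already known to be a complex manifold (Proposition~\ref{1.17}) and that the fiber of $Hom^i(\Gamma,G) \to X^i_G(\Gamma)$ over a point $[\rho]$ is the orbit $G\cdot\rho \cong G/C(G)$ of dimension $\dim G$, whence the additivity of dimension along a submersion gives the claim. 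Everything else is the purely arithmetic substitution of $\dim\SL(n,\C) = n^2-1$ into Proposition~\ref{1.18}, together with the already-recorded equality of dimensions between the Anosov character varieties and the full irreducible ones.
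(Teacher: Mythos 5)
Your proposal is correct and follows essentially the same route as the paper: freeness of the $G/C(G)$-action on $Hom^i(\Gamma,G)$ gives $\dim X^i_G(\Gamma)=\dim Hom^i(\Gamma,G)-\dim G+\dim C(G)$ with $\dim C(\SL(n,\C))=0$, and the values from Proposition~\ref{1.18} together with the openness of $X^{i,A}_G(\Gamma)$ in $X^i_G(\Gamma)$ finish the argument. Your additional remark that the relevant hypothesis for the dimension-subtraction formula is a free \emph{proper} action of the positive-dimensional group $G/C(G)$ (rather than proper discontinuity, which is a discrete-group notion) is a fair point of care that the paper glosses over, but it does not change the substance of the proof.
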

\begin{proof}
    Since, $G/{C(G)}$ acts freely on $Hom^i (\Gamma, G)$ for $G=\SL(n,\C)$, $$\dim X^i_G (\Gamma)=\dim Hom^i (\Gamma, G)-\dim G/{C(G)}=\dim Hom^i (\Gamma, G)-\dim G+\dim C(G).$$ In our case, $\dim C(G)=0$ since $G=\SL(n,\C)$ is semisimple. Now,  put the corresponding values of $\dim Hom^i (\Gamma, G)$ from \propref{1.18} for $\Gamma=F_k$, resp.  $\pi_1(\Sigma_g)$.
\end{proof}

  Using similar ideas as in the proof of \cite[Proposition\,49]{si}, the following proposition is obtained. 
\begin{proposition}\label{1.20}
Let $\Gamma=F_k$ or $\pi_1(\Sigma_g)$, $g \geq 2$ and  $G$ is any affine complex connected semisimple  algebraic group. Then 
    $X^i_G(\Gamma)$ and  $X^{zd,A}_G(\Gamma)$ are complex orbifolds. 
\end{proposition}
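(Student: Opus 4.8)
The plan is to mimic the argument of Proposition \ref{1.17}, but this time replacing the freeness of the $G/C(G)$-action by a weaker statement: that the action is \emph{properly discontinuous with finite stabilizers}. First I would recall that, for $\Gamma = F_k$ or $\pi_1(\Sigma_g)$ with $g \geq 2$, the variety $Hom^i(\Gamma, G)$ is a non-singular affine variety (Proposition \ref{1.16}), hence a complex manifold in the complex topology. As observed after Corollary \ref{cor}, the quotient $Hom^i(\Gamma,G)/G$ coincides with $Hom^i(\Gamma,G)/(G/C(G))$, since $C(G)$ acts trivially by conjugation; and similarly for $Hom^{zd}_{(P^+,P^-)}(\Gamma,G)$, which by Corollary \ref{1.15} is an open submanifold of $Hom^i(\Gamma,G)$. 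So it suffices to analyze the $G/C(G)$-action on these complex manifolds and show that in each case the quotient is a complex orbifold.

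Next I would establish the two properties of the action that yield an orbifold structure. For \emph{proper discontinuity}: this is exactly \cite[Proposition 1.1]{jm}, invoked already in the proof of Proposition \ref{1.17}, and it restricts to the open invariant subset $Hom^{zd}_{(P^+,P^-)}(\Gamma,G)$. For \emph{finiteness of stabilizers}: the stabilizer in $G$ of an irreducible representation $\rho$ is the centralizer of $\rho(\Gamma)$ in $G$; since $\rho(\Gamma)$ is irreducible, by Schur's lemma this centralizer consists of scalar matrices in $G$, i.e. it is contained in the finite group $Z(G) \cap (\text{scalars})$ — for $G = \SL(n,\C)$ this is $\mu_n$, and for a general semisimple $G$ the center $Z(G)$ is finite. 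Hence the stabilizer in $G/C(G)$ of any point is finite. A proper action of a (discrete, here zero-dimensional) Lie group on a complex manifold with finite stabilizers, acting by biholomorphisms, produces a complex orbifold as quotient; this is the standard construction of orbifold charts by slicing. One should note that $G/C(G)$ is a positive-dimensional group, not discrete, so to invoke the discrete-group orbifold construction I would instead argue as in \cite{si}: the action is proper and all orbits have the same dimension $\dim G - \dim C(G) = \dim G$ (they are free orbits modulo a finite group), so locally the quotient looks like a slice modulo the finite stabilizer, which is precisely an orbifold chart.

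The main obstacle — really the only delicate point — is packaging the local structure correctly: one must check that near a point $[\rho]$ the orbifold chart is well-defined, i.e. produce a slice transverse to the $G$-orbit through $\rho$ on which the finite stabilizer $\st(\rho)/C(G)$ acts, with the quotient of the slice giving a neighborhood of $[\rho]$ in $X^i_G(\Gamma)$. This is where ``similar ideas as in the proof of \cite[Proposition 49]{si}'' do the work: properness of the action gives a local slice (a Palais-type slice theorem for proper actions), the slice is a complex submanifold since the action is holomorphic, and the residual finite group acts biholomorphically on it, yielding the orbifold chart. Transition maps between such charts are then holomorphic because they come from the holomorphic $G$-action. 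Once the charts are in place for $X^i_G(\Gamma)$, the statement for $X^{zd,A}_G(\Gamma)$ follows immediately, since $Hom^{zd}_{(P^+,P^-)}(\Gamma,G)$ is an open $G$-invariant submanifold, so its image in the quotient is an open suborbifold of $X^i_G(\Gamma)$. This completes the plan.
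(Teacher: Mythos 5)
Your proposal follows essentially the same route as the paper's proof: proper discontinuity of the $G/C(G)$-action via \cite[Proposition 1.1]{jm}, finiteness of stabilizers, and the fact that the quotient of a complex manifold by such an action is a complex orbifold; your extra care about slices for the positive-dimensional group $G/C(G)$ is a reasonable elaboration of what the paper leaves implicit. The one step to adjust is the justification of finite stabilizers: for a general affine complex connected semisimple $G$, ``irreducible'' here means not contained in any proper parabolic subgroup, so Schur's lemma and ``scalar matrices'' do not directly apply (there is no distinguished linear representation, and parabolic irreducibility is not linear irreducibility); the correct reference, and the one the paper uses, is \cite[Proposition 15]{si}, which says the centralizer of an irreducible subgroup of $G$ is a finite extension of $C(G)$.
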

\begin{proof}
    $G/{C(G)}$ action on the complex manifold $Hom^i(\Gamma,G)$ is properly discontinuous by \cite[Proposition\,1.1]{jm}. Since $Hom_{(P^+,P^-)}^{zd} (\Gamma, G)$ is open in $Hom^i (\Gamma, G)$, $G/{C(G)}$ action on the complex manifold $Hom_{(P^+,P^-)}^{zd} (\Gamma, G)$ is also properly discontinuous. From \cite[Proposition\,15]{si}, the centralizer of an irreducible subgroup of  $G$ is a finite extension of $C(G)$. Therefore $G/{C(G)}$ acts on $Hom^i (\Gamma, G)$ and  $Hom_{(P^+,P^-)}^{zd} (\Gamma, G)$ with finite stabilizers. The quotient of a complex manifold by  properly discontinuous group action with finite stabilizers is  a complex orbifold. Therefore,  $X^i_G(\Gamma)$ and  $X^{zd,A}_G(\Gamma)$ are complex orbifolds.
\end{proof}

Observe that, as $Hom_{(P^+,P^-)}^{zd} (\Gamma, G)$ is open in $Hom^i (\Gamma, G)$,  hence  $X^{zd,A}_G(\Gamma)$ is  open in $X^i_G(\Gamma)$. Therefore $\dim 
 Hom_{(P^+,P^-)}^{zd} (\Gamma, G)=\dim Hom^i (\Gamma, G)$,  and $\dim  X_G^{zd,A} (\Gamma)=\dim X^i_G (\Gamma)$ as complex orbifolds.

\begin{proposition}\label{1.21}
    Let $G$ be any affine complex connected semisimple algebraic group.
    \begin{enumerate}
    \item For $\Gamma=F_k$ where $k\geq 2$, $\dim X^i_G (F_k)=(k-1)\dim G=\dim X_G^{zd,A} (F_k)$ as a complex orbifold.

    \item For $\Gamma=\pi_1(\Sigma_g)$ where $g\geq2$,  $\dim X^i_G (\pi_1(\Sigma_g))=(2g-2)\dim G=\dim X_G^{zd,A} (\pi_1(\Sigma_g))$ as a complex orbifold.
    \end{enumerate}
\end{proposition}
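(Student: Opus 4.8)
The plan is to reduce the statement directly to the already-established dimension formulas for $\dim Hom^i(\Gamma,G)$ and to the freeness/proper-discontinuity package assembled in \propref{1.20}, exactly mirroring the argument used for \propref{1.19} but now for an arbitrary affine complex connected semisimple algebraic group $G$ instead of just $\SL(n,\C)$. First I would recall from the discussion following \propref{1.20} that $X^{zd,A}_G(\Gamma)$ is an open sub-orbifold of $X^i_G(\Gamma)$ (since $Hom^{zd}_{(P^+,P^-)}(\Gamma,G)$ is open in $Hom^i(\Gamma,G)$ in the complex topology by \corref{1.15}), so it suffices to compute $\dim X^i_G(\Gamma)$; the two dimensions then agree automatically.

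Next I would use that $X^i_G(\Gamma)=Hom^i(\Gamma,G)/(G/C(G))$ and that this action is properly discontinuous with finite stabilizers by \propref{1.20}. A complex orbifold obtained as such a quotient of a complex manifold $M$ by a group acting locally like a finite group has $\dim_{\C}(M/\text{group}) = \dim_{\C} M - \dim_{\C}(G/C(G))$, since the orbits of $G/C(G)$ through points with finite stabilizer are submanifolds of dimension $\dim G - \dim C(G)$. So
\begin{equation*}
\dim X^i_G(\Gamma) = \dim Hom^i(\Gamma,G) - \dim G + \dim C(G).
\end{equation*}
Because $G$ is semisimple, $C(G)$ is finite, hence $\dim C(G)=0$, and the formula reduces to $\dim X^i_G(\Gamma)=\dim Hom^i(\Gamma,G)-\dim G$.

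Finally I would substitute the values of $\dim Hom^i(\Gamma,G)$ supplied by \propref{1.18}: for $\Gamma=F_k$ this is $k\dim G$, giving $\dim X^i_G(F_k)=(k-1)\dim G$; for $\Gamma=\pi_1(\Sigma_g)$ it is $(2g-1)\dim G$, giving $\dim X^i_G(\pi_1(\Sigma_g))=(2g-2)\dim G$. Combining with the equality $\dim X^{zd,A}_G(\Gamma)=\dim X^i_G(\Gamma)$ from the first step finishes both cases.

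The only genuinely delicate point — and the one I would be careful to state rather than gloss over — is the orbit-dimension bookkeeping in the orbifold quotient: one must know that the $G/C(G)$-orbit through a point of $Hom^i(\Gamma,G)$ is an immersed submanifold of dimension exactly $\dim G$, which is precisely where the CI-type input (centralizer of an irreducible subgroup is a finite extension of $C(G)$, \cite[Proposition 15]{si}) and the properness of the action from \cite[Proposition 1.1]{jm} are used; everything else is a routine substitution. Since all of these ingredients have already been invoked in the proof of \propref{1.20}, the present proof is essentially a transcription of the $\SL(n,\C)$ argument of \propref{1.19} with $\dim C(G)=0$ retained symbolically and then set to zero by semisimplicity.
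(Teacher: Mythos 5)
Your proposal is correct and follows essentially the same route as the paper: the openness of $X^{zd,A}_G(\Gamma)$ in $X^i_G(\Gamma)$ (already recorded after \propref{1.20}) reduces everything to $\dim X^i_G(\Gamma)$, which is computed as $\dim Hom^i(\Gamma,G)-\dim G/C(G)$ using the finite-stabilizer action from \cite[Proposition 15]{si}, with $\dim C(G)=0$ by semisimplicity and the values from \propref{1.18} substituted at the end. Your extra remark about the orbit-dimension bookkeeping is a sensible elaboration of a step the paper leaves implicit, but it does not change the argument.
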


\begin{proof}
It follows from \cite[Proposition\,15]{si} that, 
  $G/{C(G)}$ acts  on $Hom^i (\Gamma, G)$  with finite stabilizers. Therefore, $$\dim X^i_G (\Gamma)=\dim Hom^i (\Gamma, G)-\dim G/{C(G)}=\dim Hom^i (\Gamma, G)-\dim G+\dim C(G).$$ In our case, $\dim C(G)=0$ since $G$ is semisimple.  Now using \propref{1.18},  the result is obtained.
\end{proof}

\begin{theorem}\label{2.1}
 
Let $\Gamma=F_k$, resp.  $\pi_1(\Sigma_g)$, $k,g \geq 2$. The character variety of $(Q^+, Q^-)$-Anosov irreducible representations of $\Gamma$ into $\SL(n , \C)$ is a complex manifold of complex  dimension \hbox{$(k-1)(n^2-1)$}, resp. $(2g-2)(n^2-1) $. 
\end{theorem}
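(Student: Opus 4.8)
The plan is to recognize that \thmref{2.1} follows immediately by specializing the structural results of this section to $G = \SL(n,\C)$. First I would recall that, with the notation fixed above, the character variety of $(Q^+,Q^-)$-Anosov irreducible representations of $\Gamma$ into $\SL(n,\C)$ is by definition $X^{i,A}_{\SL(n,\C)}(\Gamma) = Hom^i_{(Q^+,Q^-)}(\Gamma,\SL(n,\C))/\SL(n,\C)$, and I would note that since $\SL(n,\C)$ is an affine complex connected semisimple algebraic group with property CI, all the hypotheses needed below are met.

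Next I would invoke \propref{1.17} with $G = \SL(n,\C)$ to conclude that $X^{i,A}_{\SL(n,\C)}(\Gamma)$ is a complex manifold, both for $\Gamma = F_k$ and for $\Gamma = \pi_1(\Sigma_g)$, $g \geq 2$. The ingredients behind that proposition are already in place: $Hom^i_{(Q^+,Q^-)}(\Gamma,\SL(n,\C))$ is an open complex submanifold of the non-singular variety $Hom^i(\Gamma,\SL(n,\C))$ by \corref{1.13} together with \propref{1.16}; the group $\SL(n,\C)/C(\SL(n,\C))$ acts freely on it by \corref{cor}; and that action is properly discontinuous by \cite[Proposition\,1.1]{jm}, so the quotient is again a complex manifold.

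For the dimension count I would simply substitute $G = \SL(n,\C)$, so that $\dim G = n^2 - 1$ (and $\dim C(G) = 0$ by semisimplicity), into \propref{1.19}. This yields $\dim X^{i,A}_{\SL(n,\C)}(F_k) = (k-1)(n^2-1)$ and $\dim X^{i,A}_{\SL(n,\C)}(\pi_1(\Sigma_g)) = (2g-2)(n^2-1)$ as complex manifolds, which is exactly the assertion.

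Since every building block has already been proved, there is no genuine obstacle remaining; the argument is an assembly of \propref{1.17} and \propref{1.19}. The only point requiring a little care is keeping track of the fact that the relevant object is the Anosov locus $X^{i,A}_G$ rather than the full irreducible character variety $X^i_G$ — but these share the same dimension precisely because $Hom^i_{(Q^+,Q^-)}(\Gamma,\SL(n,\C))$ is open in $Hom^i(\Gamma,\SL(n,\C))$, as recorded just after \propref{1.17}.
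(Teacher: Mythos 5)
Your proposal is correct and follows exactly the paper's own argument: the theorem is deduced by combining \propref{1.17} (manifold structure) with \propref{1.19} (dimension count) and substituting $\dim \SL(n,\C) = n^2-1$. The extra detail you supply on the ingredients behind \propref{1.17} is accurate but not needed beyond what the paper already records.
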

\begin{proof}
        This proof follows from \propref{1.17} and \propref{1.19} and the fact that the complex dimension of $\SL(n, \C)$ is $n^2-1$.
\end{proof}

We also have the following theorem concerning Zariski dense Anosov representations.  
\begin{theorem}
        Let $\Gamma=F_k$, resp.  $\pi_1(\Sigma_g)$  where $k,g \geq 2$. Let $G$ be an affine complex connected semisimple algebraic group and $(P^{+},P^{-})$ be a pair of opposite parabolic subgroups of $G$. The character variety of $(P^+, P^-)$-Anosov Zariski dense representations of $\Gamma$ into $G$ is a complex orbifold of complex dimension $(k-1) \dim G$, resp. $(2g-2) \dim G$.
\end{theorem}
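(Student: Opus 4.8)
The plan is to deduce the theorem directly from the structural results already assembled in \secref{3}, in exactly the way \thmref{2.1} follows from \propref{1.17} and \propref{1.19}. By the notational conventions fixed above, the character variety of $(P^+,P^-)$-Anosov Zariski dense representations of $\Gamma$ into $G$ is precisely $X^{zd,A}_G(\Gamma)=Hom_{(P^+,P^-)}^{zd}(\Gamma,G)/G$, so it suffices to (i) exhibit this quotient as a complex orbifold and (ii) compute its dimension.

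For (i), I would first recall from \corref{1.15} that $Hom_{(P^+,P^-)}^{zd}(\Gamma,G)$ is open, in the complex topology, inside $Hom^i(\Gamma,G)$, which by \propref{1.10} and \propref{1.16} is a non-singular affine variety and therefore a complex manifold; hence the open subset $Hom_{(P^+,P^-)}^{zd}(\Gamma,G)$ is itself a complex manifold. Then, since \propref{1.6} (equivalently \lemref{ml}) shows that the conjugation action of $G$ preserves this locus, and \propref{1.14} shows that Zariski dense representations are irreducible, the centralizer results for irreducible subgroups apply verbatim: the induced $G/C(G)$-action is properly discontinuous by \cite[Proposition~1.1]{jm} and has finite stabilizers by \cite[Proposition~15]{si}. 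The quotient of a complex manifold by a properly discontinuous action with finite stabilizers is a complex orbifold, which is exactly the content of \propref{1.20}, and this settles (i).

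For (ii), the key observation is that openness passes to the quotients: since $Hom_{(P^+,P^-)}^{zd}(\Gamma,G)$ is open in $Hom^i(\Gamma,G)$, one gets $\dim X^{zd,A}_G(\Gamma)=\dim X^i_G(\Gamma)$. As $G/C(G)$ acts with finite, hence zero-dimensional, stabilizers, $\dim X^i_G(\Gamma)=\dim Hom^i(\Gamma,G)-\dim G/C(G)=\dim Hom^i(\Gamma,G)-\dim G$, using $\dim C(G)=0$ by semisimplicity. Finally $\dim Hom^i(\Gamma,G)=\dim Hom(\Gamma,G)$ by Zariski-openness, and this equals $k\dim G$ when $\Gamma=F_k$ (as $Hom(F_k,G)\cong G^k$) and $(2g-1)\dim G$ when $\Gamma=\pi_1(\Sigma_g)$ by \cite[Lemma~1]{go83}; subtracting $\dim G$ yields $(k-1)\dim G$, resp. $(2g-2)\dim G$. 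This is precisely \propref{1.21}, so in the end I would simply combine \propref{1.20} and \propref{1.21}.

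I do not anticipate a genuine obstacle: all the substantial work — closedness of the conjugation orbits on the Anosov locus (\lemref{ml}, \propref{1.6}), openness of $Hom_{(P^+,P^-)}^{zd}$ inside the smooth variety $Hom^i$, proper discontinuity and finiteness of stabilizers, and the dimension count — has already been carried out in the preceding propositions. The only point that needs a moment's care is the bookkeeping that reconciles the two ways of viewing the dimension (as an open sub-orbifold of $X^i_G(\Gamma)$ and as a direct quotient of $Hom_{(P^+,P^-)}^{zd}(\Gamma,G)$), but this is immediate once one uses that openness is inherited by the quotient, as noted right after \propref{1.21}.
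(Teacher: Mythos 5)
Your proposal is correct and follows essentially the same route as the paper: the theorem is obtained by combining \propref{1.20} (the quotient of the open locus $Hom_{(P^+,P^-)}^{zd}(\Gamma,G)\subset Hom^i(\Gamma,G)$ by the properly discontinuous $G/C(G)$-action with finite stabilizers is a complex orbifold) with \propref{1.21} (the dimension count $\dim Hom^i(\Gamma,G)-\dim G$). The additional details you supply are exactly those already established in the cited propositions, so no further argument is needed.
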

    
\begin{proof} When $G$ is an affine complex connected semisimple algebraic group, it follows from \propref{1.20} and \propref{1.21} that,  the character varieties of $(P^+, P^-)$-Anosov Zariski dense representations of $\Gamma$ into $G$ are complex orbifolds of respective dimensions. \end{proof}

As a special case of the above result, we obtain the following.

   \begin{theorem}\label{2.2}
    Let $\Gamma=F_k$, resp.  $\pi_1(\Sigma_g)$  where $k,g \geq 2$.  The character variety of $(P^+, P^-)$-Anosov Zariski dense representations of $\Gamma$ into $\SL(n, \C)$ is  a complex manifold of complex dimension $(k-1)(n^2-1)$, resp. $(2g-2)(n^2-1)$.
\end{theorem}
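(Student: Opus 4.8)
The plan is to read this off the preceding theorem, whose conclusion is that $X^{zd,A}_G(\Gamma)$ is a complex \emph{orbifold} for every affine complex connected semisimple algebraic group $G$, and then to upgrade ``orbifold'' to ``manifold'' using the one extra feature available for $G=\SL(n,\C)$: property CI. First I would recall the ingredients already in place. By \propref{1.10} and \propref{1.16}, $Hom^i(\Gamma,\SL(n,\C))$ is a non-singular affine algebraic variety, hence a complex manifold in the complex topology; by \corref{1.15} the subset $Hom_{(P^+,P^-)}^{zd}(\Gamma,\SL(n,\C))$ is open in it in the complex topology, so it is itself a complex manifold (an open submanifold of $Hom^i(\Gamma,\SL(n,\C))$). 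Moreover, as explained in \secref{3}, on this locus the GIT quotient and the orbit space under the conjugation $G$-action coincide, so $X^{zd,A}_{\SL(n,\C)}(\Gamma)$ is literally the quotient space $Hom_{(P^+,P^-)}^{zd}(\Gamma,\SL(n,\C))/G$.

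The decisive step is then the following. Since $\SL(n,\C)$ has property CI \cite[Example 18]{si}, \corref{cor} gives that $G/C(G)$ acts \emph{freely} on $Hom_{(P^+,P^-)}^{zd}(\Gamma,\SL(n,\C))$ — not merely with finite stabilizers, which is all that was used in the general semisimple case. This action is also properly discontinuous: by \cite[Proposition\,1.1]{jm} the $G/C(G)$-action on $Hom^i(\Gamma,\SL(n,\C))$ is properly discontinuous, and restriction to the open invariant subset $Hom_{(P^+,P^-)}^{zd}(\Gamma,\SL(n,\C))$ preserves proper discontinuity. The quotient of a complex manifold by a free and properly discontinuous group action is a complex manifold; hence $X^{zd,A}_{\SL(n,\C)}(\Gamma)=Hom_{(P^+,P^-)}^{zd}(\Gamma,\SL(n,\C))/(G/C(G))$ is a complex manifold. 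For the dimension, $X^{zd,A}_{\SL(n,\C)}(\Gamma)$ is open in $X^i_{\SL(n,\C)}(\Gamma)$, so the two have equal complex dimension, and by \propref{1.21} (applied with $G=\SL(n,\C)$) this equals $(k-1)\dim\SL(n,\C)$ for $\Gamma=F_k$ and $(2g-2)\dim\SL(n,\C)$ for $\Gamma=\pi_1(\Sigma_g)$; since $\dim\SL(n,\C)=n^2-1$, the stated dimensions follow.

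I do not expect a genuinely hard analytic point here: the entire content is assembling the earlier results and noticing that property CI turns the orbifold of the previous theorem into a manifold in the $\SL(n,\C)$ case. The one place that deserves a line of care is checking that freeness and proper discontinuity both genuinely descend to the open invariant subset and that the smooth quotient so obtained really is the character variety in the GIT sense — i.e.\ that restricting to Zariski dense (equivalently irreducible, by \propref{1.14}) representations does not collapse or omit GIT orbits, which is guaranteed because on this locus the $G$-orbits are closed (cf.\ \lemref{ml} and \propref{1.6}) and every point of $\mathcal X$ is a unique orbit of completely reducible representations.
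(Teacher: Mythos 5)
Your proposal is correct and follows essentially the same route as the paper: it invokes the orbifold statement for general semisimple $G$, then uses property CI of $\SL(n,\C)$ (via \corref{cor}) to upgrade the properly discontinuous $G/C(G)$-action on the open complex submanifold $Hom_{(P^+,P^-)}^{zd}(\Gamma,\SL(n,\C))$ to a free one, so that the quotient is a complex manifold, with the dimension read off from the openness of $X^{zd,A}_{\SL(n,\C)}(\Gamma)$ in $X^i_{\SL(n,\C)}(\Gamma)$ together with \propref{1.19}. The extra remarks you add about the GIT quotient agreeing with the orbit space on this locus are consistent with the paper's discussion in \secref{3} and do not change the argument.
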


\begin{proof} Now, take $G=\SL(n,\C)$ in \propref{1.20}. Then $G/{C(G)}$ action on the complex manifold  $Hom_{(P^+,P^-)}^{zd} (\Gamma, G)$  will be free (\,since $\SL(n,\C)$ has property  CI\,). It follows from \propref{1.20}, this action is properly discontinuous. Hence, its character variety $X^{zd,A}_G(\Gamma)$ will become a complex manifold. It is obvious that $\dim X^i_G (\Gamma)=\dim 
 X_G^{zd,A} (\Gamma)$ as complex manifold. Now, its dimension can be concluded from \propref{1.19} and noting the fact that the complex dimension of $\SL(n, \C)$ is $n^2-1$. \end{proof}

\section{Symplecticity of Character Varieties of Anosov Representations }\label{s}

A representation $\rho: \Gamma \to G$ is said to be good if it is irreducible and the centralizer of $\rho(\Gamma)$ is center of $G$.  Now take $G=\SL(n,\C)$. Then  a representation is good if and only if  it is irreducible due to the CI property of $\SL(n,\C)$. Denote $[\rho]$ to be the equivalence class of $\rho \in Hom(\Gamma,G)$ under the 
 conjugation  $G$-action.

Since $X^{i,A}_G(\Gamma)$ and $X^{zd,A}_G(\Gamma)$ are open complex submanifolds of $X^i_G(\Gamma)$, by \thmref{2.1} and  \thmref{2.2}  respectively, we have $T_{[\rho]}\, {X^i_G(\Gamma)}=T_{[\rho]}\, {X^{i,A}_G(\Gamma)}$, and $ T_{[\rho]}\, {X^i_G(\Gamma)}=T_{[\rho]}\, {X^{zd,A}_G(\Gamma)}$  for all  $[\rho]$ in ${X^{i,A}_G(\Gamma)}$,resp.  ${X^{zd,A}_G(\Gamma)}.$ 

 Consider, $\Gamma=\pi_1(\Sigma_g)$ for $g\geq2$.  Note that $H^1(\Gamma, Ad\circ \rho)$ denotes the first cohomology group of $\Gamma$ with coefficients in $\mathfrak{g}$ (Lie algebra of $G$) twisted by $\Gamma \xrightarrow{\rho}G \xrightarrow{Ad}End(\mathfrak{g})$ where $Ad$ is the adjoint representation of $G$. Sikora proved that the tangent space of any element $[\rho]$ in the character variety of good representations of $Hom(\Gamma,G)$ is actually $H^1(\Gamma, Ad\circ \rho)$, that is,  $T_{[\rho]}\, {Hom^g(\Gamma,G)}/G=H^1(\Gamma, Ad \circ\rho)$ where $Hom^g(\Gamma,G)$ denotes the space of good representations of $Hom(\Gamma,G)$ and $G$ is any affine complex reductive group \cite[Section 14]{si}.

We already notice that, in the case of $G=\SL(n,\C)$, the irreducible representations are same as the good representations. Therefore, from the above paragraph when $G$ is $\SL(n,\C)$, it can be concluded that $$T_{[\rho]}\, {Hom^i(\Gamma,G)}/G=T_{[\rho]}\, {X^i_G(\Gamma)}=H^1(\Gamma, Ad\circ \rho) \hbox{ for all } [\rho] \in {X^i_G(\Gamma)}.$$ Simultaneously, from the second paragraph, it can be said that $T_{[\rho]} \,{X^{i,A}_G(\Gamma)}$, resp.   $T_{[\rho]}\, {X^{zd,A}_G(\Gamma)}$ $=$\,$H^1(\Gamma, Ad\circ \rho)$ for all $[\rho] \in {X^{i,A}_G(\Gamma)}$, resp. $X^{zd,A}_G(\Gamma)$. 

Suppose, $B:\mathfrak{g} \times \mathfrak{g} \to \C$ is a bilinear form. This is called  $Ad$-invariant when for all $x,y \in \mathfrak{g}$, $B(Ad(g)x,Ad(g)y)= B(x,y)$. 

Now, we have the following.

\begin{theorem}\label{2.4}
    Let $\Gamma=\pi_1(\Sigma_g) \,(g\geq 2)$, $G=\SL(n,\C).$ Then the character variety of $(Q^+, Q^-)$-Anosov irreducible representations, \hbox{resp.} $(P^+, P^-)$-Anosov Zariski dense  representations,  of $\Gamma$ into $G$ 
    denoted by $X^{i,A}_G(\Gamma)$, \hbox{resp.} $X^{zd,A}_G(\Gamma)$,  is a holomorphic symplectic manifold with respect to a holomorphic closed non-degenerate exterior 2-form $\omega_B.$ 
\end{theorem}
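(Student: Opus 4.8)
The plan is to deduce the statement from the fact that the smooth locus of the $\SL(n,\C)$-character variety of a closed surface group carries a Goldman-type holomorphic symplectic form, and then to transport that form to the two Anosov loci using openness. As a first step, recall from the paragraphs preceding the theorem that $X^{i,A}_G(\Gamma)$ and $X^{zd,A}_G(\Gamma)$ are \emph{open} complex submanifolds of $X^i_G(\Gamma)$, and that at each point $[\rho]$ of either one the tangent space is canonically $T_{[\rho]}=H^1(\Gamma,\mathrm{Ad}\circ\rho)$, the group cohomology of $\Gamma=\pi_1(\Sigma_g)$ with coefficients in $\mathfrak{g}=\mathfrak{sl}(n,\C)$ twisted by $\mathrm{Ad}\circ\rho$. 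Since the restriction of a holomorphic symplectic form to an open submanifold is again holomorphic symplectic, it suffices to construct such a form $\omega_B$ on $X^i_G(\Gamma)$ and then restrict.

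For the construction I would follow the Atiyah--Bott--Goldman recipe. Fix $B$ to be the trace form $(X,Y)\mapsto\mathrm{tr}(XY)$ on $\mathfrak{sl}(n,\C)$, which is symmetric, non-degenerate, $\C$-bilinear and $\mathrm{Ad}$-invariant. Combining the cup product in group cohomology with $B$, define, for $u,v\in H^1(\Gamma,\mathrm{Ad}\circ\rho)$,
\[
\omega_B([\rho])(u,v)=\big\langle B_*(u\cup v),\,[\Sigma_g]\big\rangle,
\]
where $u\cup v\in H^2(\Gamma,\mathfrak{g}\otimes\mathfrak{g})$, the homomorphism $B_*\colon H^2(\Gamma,\mathfrak{g}\otimes\mathfrak{g})\to H^2(\Gamma,\C)$ is induced by the $\Gamma$-equivariant pairing $B$ (the action on $\C$ being trivial by $\mathrm{Ad}$-invariance), and $[\Sigma_g]\in H_2(\Gamma,\C)$ is the fundamental class of the closed oriented surface, which gives the canonical isomorphism $H^2(\Gamma,\C)\cong\C$. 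Graded commutativity of the cup product together with the symmetry of $B$ makes $\omega_B$ alternating, and since $\mathrm{Ad}\circ\rho$, $B$ and the cup product all vary holomorphically (indeed algebraically) with $\rho$, this produces a holomorphic $2$-form on $X^i_G(\Gamma)$.

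Non-degeneracy I would deduce from Poincar\'e duality for surface groups: for any finite-dimensional $\Gamma$-module $V$ the cup-product pairing $H^1(\Gamma,V)\otimes H^1(\Gamma,V^*)\to H^2(\Gamma,\C)\cong\C$ is perfect, and the non-degenerate $\mathrm{Ad}$-invariant form $B$ identifies $\mathfrak{g}$ (twisted by $\mathrm{Ad}\circ\rho$) with its own dual; hence $\omega_B([\rho])$ is a non-degenerate alternating form on $T_{[\rho]}X^i_G(\Gamma)$. The delicate point --- the one I expect to be the main obstacle, since it does not follow formally from the pointwise formula --- is closedness of $\omega_B$. I would settle it either by invoking Goldman's theorem that this form is closed (equivalently, by importing the by-now-standard holomorphic symplectic structure on the smooth locus of the $\SL(n,\C)$-character variety of $\pi_1(\Sigma_g)$), or, for a self-contained treatment, by realizing $\omega_B$ as the symplectic reduction --- along the moment map given by curvature --- of the translation-invariant $2$-form $(a,b)\mapsto\int_{\Sigma_g}B(a\wedge b)$ on the space of connections on a principal $G$-bundle over $\Sigma_g$, where closedness is manifest and is preserved under reduction. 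Granting closedness on $X^i_G(\Gamma)$, restricting $\omega_B$ to the open complex submanifolds $X^{i,A}_G(\Gamma)$ and $X^{zd,A}_G(\Gamma)$ yields a holomorphic, closed, non-degenerate $2$-form on each, which is exactly the assertion of the theorem.
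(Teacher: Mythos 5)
Your proposal is correct and follows essentially the same route as the paper: both construct the Goldman-type form $\omega_B(u,v)=B(u\cup v)$ on tangent spaces $H^1(\Gamma,\mathrm{Ad}\circ\rho)$, obtain skew-symmetry and non-degeneracy from the cup-product pairing (Poincar\'e duality, cited in the paper via Sikora), defer closedness and holomorphy to Goldman, and then use that the two Anosov loci are open complex submanifolds with the same tangent spaces. The only cosmetic differences are that you fix $B$ to be the trace form and make the evaluation against the fundamental class explicit, while the paper allows a general symmetric $\mathrm{Ad}$-invariant non-degenerate $B$ and defines the form directly on the Anosov loci rather than restricting from $X^i_G(\Gamma)$.
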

\begin{proof}
Let, $M$ be the complex manifold $X^{i,A}_G(\Gamma).$
Denote, $\Lambda^2\; T\,^*M$ = disjoint union of the vector spaces of  skew-symmetric bilinear forms $T_{[\rho]}\, M \, \times \, T_{[\rho]}\, M \to \C$ on $[\rho]\in M$. This is a vector bundle over  $M$ with a complex manifold structure.

Now,  for a symmetric, $Ad$-invariant, non-degenerate, bilinear form $B:\mathfrak{g} \times \mathfrak{g} \to \C$,
consider a map $\omega_B: M \to \Lambda^2\; T\,^*M $ such that on every point $[\rho] \in M$,  $\omega_B([\rho]) : {T_{[\rho]}} \,M \, \times\,  {T_{[\rho]}}\, M \to \C$ is 
  defined by   $ \omega_B(v,w)= B(v\,\cup\, w) \hbox{ for all } v,w \in T_{[\rho]}\, M=H^1(\Gamma, Ad\circ \rho)$,  where $\cup : {H^1(\Gamma, Ad \circ\rho)} \times {H^1(\Gamma, Ad \circ \rho)}\to H^2(\Gamma, {Ad\circ \rho} \otimes {Ad \circ \rho} )=\mathfrak{g} \otimes_{\Z\Gamma}\mathfrak{g} $ is  induced by the cup product (see \cite[Chapter\,5, Section\,3]{br}).
  
  From \cite[Corollary 59]{si},  it can be said that $w_B([\rho])$ is a skew-symmetric, non-degenerate bilinear form on $T_{[\rho]}\, M$ for every  $[\rho] \in M$.  Therefore $w_B([\rho])$ is a symplectic form on $T_{[\rho]}\, M$.  It follows from \cite[Section\,3.10]{go84}, \cite[Remark\,60]{si}  that,  $\omega_B$ is  a holomorphic map between two complex manifolds $M$ and $\Lambda^2\; T\,^*M$.   By \cite[Section\,1.7,\,1.8]{go84}, we get that $\omega_B$ is closed. Similarly, we can define such $\omega_B$ on the complex manifold $X^{zd,A}_G(\Gamma)$ with same properties since  $T_{[\rho]}\, {X^{zd,A}_G(\Gamma)}=H^1(\Gamma, Ad\circ \rho)$ for every $[\rho] \in {X^{zd,A}_G(\Gamma)}$. Therefore  $(X^{i,A}_G(\Gamma), \omega_B)$ and $(X^{zd,A}_G(\Gamma), \omega_B)$ are holomorphic symplectic manifolds with respect to the holomorphic closed non-degenerate exterior 2-form $\omega_B$.
\end{proof}

\end{document}